\documentclass[a4paper,11pt]{article}

\usepackage{xcolor}
\usepackage{array}
\usepackage{cancel}
\usepackage[british]{babel}
\usepackage{pifont,subfigure,graphicx}
\usepackage{caption}

\usepackage[colorlinks, linkcolor=black,citecolor=blue,urlcolor=blue]{hyperref}
\usepackage{url}

\usepackage{amsthm,amsmath,amssymb,amsfonts,mathrsfs,amsfonts,amstext,amsopn}
\usepackage{mathtools}
\usepackage{tikz}
\usepackage[normalem]{ulem}
\usepackage{hhline}
\usepackage[version=3]{mhchem}
\usepackage{bbm}

\usepackage{authblk}

 \usepackage{microtype}

\newtheorem{theorem}{Theorem}[section]
\newtheorem{lemma}[theorem]{Lemma}
\newtheorem{proposition}[theorem]{Proposition}
\newtheorem{corollary}[theorem]{Corollary}

\theoremstyle{definition}
\newtheorem{remark}[theorem]{Remark}
\newtheorem{example}[theorem]{Example}

\newcommand{\R}{\mathbb R}
\newcommand{\N}{\mathbb N}
\newcommand{\cC}{\mathcal C}
\newcommand{\cN}{\mathcal N}
\newcommand{\cR}{\mathcal R}
\newcommand{\norm}[1]{\lVert #1\rVert}

\begin{document}

\title{A Proof of the Global Attractor Conjecture in a Special Case}
\author{Carsten Wiuf}
\affil{Department of Mathematical Sciences\\ University of Copenhagen}

\date{\today}

\maketitle

\begin{abstract}
We prove the Global Attractor Conjecture for   complex balanced mass-action reaction networks whose reachable siphons satisfy two structural conditions, allowing multiple linkage classes. The proof proceeds in two steps. A structural condition relating the stoichiometric space to the reactions active within a boundary face ensures that a stoichiometric compatibility class contains at most one boundary equilibrium with any prescribed zero set. Finiteness of the possible zero sets and connectedness of the $\omega$-limit set then imply that any boundary limit set consists of a single equilibrium. To exclude convergence to such an equilibrium, we consider the embedded reaction network obtained by projecting onto the vanishing species and freezing the concentrations of the surviving species at a positive limit. The structural condition guarantees complex balance of this embedded network, while a further condition on its   minimal active linkage classes ensures that an explicit Chetaev function is strictly increasing near the boundary. This excludes the boundary point as an accumulation point whenever the surviving concentrations converge. Consequently, every positive trajectory converges to the unique positive equilibrium in its stoichiometric compatibility class. Examples illustrate the hypotheses and their relation to strong endotacticity.
\end{abstract}

\section{Introduction}

Let $\cN$ be a mass-action reaction network that is \emph{complex balanced} for its rate
constants, and let $x^*$ be the unique positive complex balanced equilibrium in a given
stoichiometric compatibility class. The Global Attractor Conjecture (GAC) asserts that every
positive trajectory in that class converges to $x^*$~\cite{HornJackson1972,CraciunDickensteinShiuSturmfels2009}.
Write $x=(x_1,\ldots,x_m)$. The Horn--Jackson entropy
\[
 V(x)=\sum_{i=1}^m\Bigl[x_i\log\frac{x_i}{x^*_i}-x_i+x^*_i\Bigr]
\]
is a strict Lyapunov function in the positive orthant: $\dot V\le0$, with equality at a positive
point only if that point is itself complex balanced, hence only at $x^*$. What $V$ does not control
is the boundary, and this is the entire content of the conjecture. GAC has been proved in special
cases: for reaction networks with a single linkage class~\cite{Anderson2011}, for strongly endotactic
reaction networks~\cite{GopalkrishnanMillerShiu2014}, for reaction networks whose stoichiometric subspace has
dimension at most three~\cite{Pantea2012}, and for first-order endotactic reaction
networks~\cite{Xu2026}; see also~\cite{AngeliDeLeenheerSontag2007,JohnstonSiegel2011,AndersonShiu2010}
for persistence criteria.

The Lyapunov function   yields a clean dichotomy, which we record below in Proposition~\ref{prop:dichotomy}, because it emphasises  what
has to be excluded. Note that  sub-level sets of $V$ in the closure of a stoichiometric compatibility class are compact.
Consequently, the positive trajectory is bounded, and its $\omega$-limit set is
non-empty, compact, connected and invariant. Throughout, $\omega(x_0)$ denotes the $\omega$-limit set of the positive
trajectory starting at $x_0$.

\begin{proposition}[Siegel and MacLean~\cite{SiegelMacLean2000}, Theorem~3.2; see also Sontag~\cite{Sontag2001}]
\label{prop:dichotomy}
Let $\cN$ be a complex  balanced reaction network and $x_0>0$. Then, exactly one of the following holds:
\begin{enumerate}
\item[(i)] $\omega(x_0)=\{x^*\}$, that is $x(t)\to x^*$; or
\item[(ii)] $\omega(x_0)\subseteq\partial\R^m_{\ge0}$, and every point of $\omega(x_0)$ is an
equilibrium.
\end{enumerate}
\end{proposition}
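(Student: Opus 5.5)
We argue with LaSalle's invariance principle applied to the Horn--Jackson function $V$ on the closure $\overline{S}$ of the stoichiometric compatibility class containing $x_0$, so that the two alternatives come from the two kinds of points where $\dot V=0$. Put $\Omega=\overline{S}\cap\{V\le V(x_0)\}$. As noted before the statement, $\Omega$ is compact. It is forward invariant: the nonnegative orthant is invariant under mass-action kinetics, the trajectory stays in the affine space $x_0+S$, and $V$ does not increase along the trajectory while it stays positive. The trajectory is therefore bounded, and $\omega(x_0)\subseteq\Omega$ is non-empty, compact, connected and invariant. The function $V$ extends continuously to $\overline{S}$ (use $0\log 0=0$). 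Since $V(x(t))$ is nonincreasing and bounded below, it converges to a limit $c$, and by continuity $V\equiv c$ on $\omega(x_0)$.

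Next we show that every point of $\omega(x_0)$ is an equilibrium. Take $y\in\omega(x_0)$ and let $y(t)$ be the solution through $y$. By invariance $y(t)\in\omega(x_0)$ for all $t$, so $V(y(t))=c$ is constant. If $y>0$, the strict Lyapunov property gives $\dot V(y)=0$ only when $y$ is complex balanced, so $y=x^*$. If $y\in\partial\R^m_{\ge0}$, let $Z=\{i: y_i=0\}$. The standard fact about mass-action systems is that the face $F_Z=\{x\ge 0: x_i=0 \text{ for } i\in Z\}$ is either forward invariant, or the trajectory enters the interior of $\R^m_{\ge 0}$ immediately. In the second case $y(t)>0$ for small $t>0$, so $\dot V(y(t))=0$ at a positive point. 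Hence $y(t)=x^*$, and since $x^*$ is an equilibrium, uniqueness of solutions forces $y=x^*$. That contradicts $y$ lying on the boundary. So $y(t)$ stays in the invariant face $F_Z$, and the dynamics there is the mass-action system of the subnetwork of reactions whose reactant complexes avoid $Z$.

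The main obstacle is to show that $y$ is an equilibrium of this restricted system. Here I would use the standard decomposition of $\dot V$ for complex balanced networks, restricted to the relative interior of the face:
\[
\dot V(x)=\sum_{y'\to y''}k_{y'\to y''}x^{y'}\bigl(\langle y''-y',\log(x/x^*)\rangle\bigr)\le\sum_{y'\to y''}k_{y'\to y''}x^{*y'}\bigl(e^{\langle y'',\log(x/x^*)\rangle}\cdots\bigr),
\]
and then apply $e^a(b-a)\le e^b-e^a$ together with complex balancing. This gives $\dot V\le 0$ on $\overline{S}$, with equality only if $\langle y''-y',\log(x/x^*)\rangle=0$ for every reaction active at $x$. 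Equality for all active reactions makes $\log(x/x^*)$ constant on each linkage class of the active subnetwork. Complex balancing at $x^*$ then balances the active fluxes complex by complex, so $\dot x=0$ at $y(t)$ and in particular at $y$. A cleaner alternative, if the decomposition is awkward on faces, is to cite Siegel--MacLean or Sontag directly.

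This gives the dichotomy. If $x^*\in\omega(x_0)$, then $c=V(x^*)=0$. Because $x^*$ is the unique minimiser of $V$ on $\overline{S}$, $V$ is strictly positive at every other point of $\overline{S}$, so $V\equiv 0$ on $\omega(x_0)$ forces $\omega(x_0)=\{x^*\}$, which is (i). Otherwise $\omega(x_0)$ contains no positive point, since the only positive point it could contain is $x^*$. Then $\omega(x_0)\subseteq\partial\R^m_{\ge0}$ and every point of it is an equilibrium, which is (ii). The two cases are mutually exclusive because $x^*>0$.
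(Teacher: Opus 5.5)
The paper gives no proof of this proposition; it quotes it from Siegel--MacLean and Sontag. Your overall frame is sound: compactness of $\Omega$, $V\equiv c$ on $\omega(x_0)$, positive points of $\omega(x_0)$ being $x^*$, and the final dichotomy via $V\ge0$ with equality only at $x^*$. The step for boundary points, however, rests on a false claim. It is not true that the face $F_Z$ of the zero set $Z$ of $y$ is either forward invariant or left immediately for the open orthant. The correct statement (Angeli--De Leenheer--Sontag, Sontag) is that for $t>0$ the zero set of $y(t)$ is a constant set $Z'\subseteq Z$ that is empty or a siphon. Moreover, $Z'$ can be a non-empty proper subset of $Z$. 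For instance, take the complex balanced network $A\rightleftharpoons B$, $C\rightleftharpoons D$ and $y=(1,0,0,0)$. Then $x_B$ becomes positive at once while $x_C=x_D=0$ forever, so $y(t)$ enters the relative interior of $\{x_C=x_D=0\}$: it neither stays in $F_{\{B,C,D\}}$ nor enters the interior. Your argument does not cover this third case. The sentence ``so $y(t)$ stays in the invariant face $F_Z$'' is therefore unjustified, since nothing you wrote uses $y\in\omega(x_0)$ to exclude that case.

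The repair is easy and removes the case distinction. Fix $t_1>0$. Then $y(t)$ lies in the relative interior of the forward invariant face $F_{Z'}$ for all $t>0$, $V$ is differentiable along the flow there, and $V(y(t))=c$ gives $\dot V(y(t_1))=0$. Your face computation then makes $y(t_1)$ an equilibrium, and uniqueness of solutions gives $y=y(t_1)$, so in fact $Z'=Z$. Alternatively, a backward orbit through $y$ inside $\omega(x_0)$ shows directly that $Z$ is a siphon. The case $Z'=\emptyset$ is your positive case.

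In the face computation you should also justify that complex balance at $x^*$ restricts to the active reactions, i.e.\ that no reaction with a $Z'$-touching source has a $Z'$-avoiding target. This holds because complex balanced networks are weakly reversible, and, $Z'$ being a siphon, every complex reachable from a $Z'$-avoiding complex avoids $Z'$. Hence each linkage class is entirely active or entirely inactive (the $\Sigma$-blind observation of Section~\ref{sec:assumptions}).

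Finally, claim $\dot V\le0$ only on relative interiors of invariant faces, not on all of $\overline{S}$. At a point of a non-invariant face, the right derivative of $V$ along the flow can be $-\infty$.
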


Alternative (i) is the conclusion of GAC. Alternative (ii) allows the limit set to contain more than one boundary equilibrium. Thus, excluding convergence to an individual boundary equilibrium does not by itself establish global attraction.

Our main result, Theorem~\ref{thm:GAC}, resolves this difficulty under two structural conditions. The first concerns the relation between the stoichiometric space and the reactions active on a boundary face. It ensures that a stoichiometric compatibility class contains at most one boundary equilibrium with any prescribed zero set. Since there are only finitely many possible zero sets, connectedness of the limit set implies that every boundary limit set consists of a single equilibrium (Corollary~\ref{cor:singleton}).

The second condition concerns the minimal active linkage classes of an embedded reaction network associated with the vanishing species. It yields a lower bound on the dissipation of an explicit Chetaev function and excludes convergence to an individual boundary equilibrium. The first structural condition also guarantees the embedded reaction network  is  complex balanced, which is needed for this local argument. Combining the two results proves global attraction. The precise conditions are stated below, after the necessary notation has been introduced.

For the local argument, suppose, for contradiction, that a positive trajectory approaches
 a boundary point.  Write the vanishing coordinates of the point as $z$ and the non-zero (surviving) coordinates
as $s$, so $x=(z,s)$ and
\begin{equation}\label{eq:zs}
 z(t_n)\to0,\qquad t_n\to\infty, \qquad s(t)\to b>0,\qquad t\to\infty,
\end{equation}
where $(t_n)_n$ is an increasing sequence.
Let $\Sigma$ denote the indices of the vanishing coordinates, and consider the embedded reaction network $\cN_\Sigma(b)$ (projection onto $\Sigma$) with fixed (\emph{frozen}) surviving coordinates $b$.
If $\cN_\Sigma(b)$ has a positive complex balanced equilibrium and satisfies the
class-wise condition defined in Section~\ref{sec:assumptions}, then the
Chetaev function increases near the boundary, excluding the point as an accumulation point.

\begin{theorem}
\label{thm:boundary}
Let $\Sigma$ be a non-empty siphon of a weakly reversible mass-action reaction network, write
$x=(z,s)$ as above, and assume that a positive trajectory satisfies $s(t)\to b>0$ as
$t\to\infty$.  Suppose that the embedded reaction network frozen at $s=b$
admits a positive complex balanced equilibrium $c$, and that each (so-called) \emph{minimal} active embedded linkage class has a non-zero non-negative stoichiometric vector (condition \emph{($\diamond$)} of Section~\ref{sec:assumptions}).  Then,
\[
 \liminf_{t\to\infty}\norm{z(t)}>0 .
\]
In particular, $(0,b)$ is not an $\omega$-limit point of the trajectory, and \eqref{eq:zs} is impossible.
\end{theorem}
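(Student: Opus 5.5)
The idea is to build a Chetaev function on the $z$-coordinates from the embedded network $\cN_\Sigma(b)$ and its complex balanced equilibrium $c$. Concretely, take
\[
 W(z)=\sum_{i\in\Sigma}\Bigl[z_i\log\frac{z_i}{c_i}-z_i+c_i\Bigr],
\]
the Horn--Jackson entropy of the embedded network. Alternatively, take a function such as $-\sum_{i\in\Sigma}\log z_i$ weighted suitably, whose increase along trajectories near $z=0$ forces $z$ away from the origin. The plan is to show that, for $t$ large and $z(t)$ small, a suitable function $H(z)$ (e.g.\ $H=\sum_{i\in\Sigma}\alpha_i z_i$ with $\alpha>0$, or the negative log-entropy) satisfies $\dot H>0$ whenever $0<\norm{z}<\varepsilon$. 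This contradicts $\liminf\norm{z(t)}=0$.

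\textbf{Step 1: rewrite the dynamics.} Write $\dot z$ as the embedded mass-action vector field with $s$ frozen at $b$, plus an error term. Since $s(t)\to b>0$, each monomial $x^{y}=z^{y_\Sigma}s^{y_{\Sigma^c}}$ equals $z^{y_\Sigma}b^{y_{\Sigma^c}}(1+o(1))$. The full $\dot z$ is therefore $\sum_{y\to y'}k_{y\to y'}b^{y_{\Sigma^c}}z^{y_\Sigma}(y'_\Sigma-y_\Sigma)(1+o(1))$. Reactions whose source has $y_\Sigma=0$ but $y'_\Sigma\ne0$ cannot occur, because $\Sigma$ is a siphon: any reaction producing a species in $\Sigma$ consumes one. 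Hence every term carries a factor $z^{y_\Sigma}$ with $y_\Sigma\neq0$, and the error is multiplicatively small relative to the frozen field.

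\textbf{Step 2: exploit complex balance of the frozen network.} Use the standard identity for complex balanced systems: $\sum_{y\to y'}\kappa_{y\to y'}c^{y}\bigl(e^{\langle y',\mu\rangle}-e^{\langle y,\mu\rangle}\bigr)=0$-type rearrangements, with $\mu=\log(z/c)$. For $z$ small, the dominant monomials are those complexes with minimal "degree" in $z$ in a given direction. Organise the complexes of each embedded linkage class by the ordering induced by $z$. The \emph{minimal} active linkage classes are those whose lowest-order monomials dominate. Weak reversibility means that within such a class, flux out of the lowest-order complexes goes to higher-order ones, so their net contribution to $\dot z$ is dominated by outflow from minimal complexes to complexes of strictly higher $\Sigma$-content. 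Condition $(\diamond)$ supplies a non-zero non-negative stoichiometric vector in each minimal class. I would use it to choose weights $\alpha\ge0$ (or a direction) such that $\langle\alpha,y'_\Sigma-y_\Sigma\rangle>0$ for the dominant reactions. This yields $\dot H\ge C\,m(z)$, where $m(z)$ is the dominant monomial size, minus error terms of order $o(m(z))$ and higher-order monomials.

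\textbf{Step 3: conclude.} Choose $T$ and $\varepsilon$ so that, for $t>T$ and $\norm{z(t)}<\varepsilon$, the error and subdominant terms are at most $\tfrac12 C m(z)$. Then $H$ increases whenever $z$ is in this neighbourhood. A trajectory entering the neighbourhood infinitely often with $\norm{z(t_n)}\to0$ must cross from $\norm{z}=\varepsilon$ down to arbitrarily small $\norm{z}$ while $H$ increases. Since $H$ is comparable to $\norm{z}$ (or to its log), this is impossible, giving $\liminf\norm{z(t)}>0$.

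\textbf{Main obstacle.} The difficulty is Step 2: obtaining a \emph{uniform} lower bound on the dissipation over all directions of approach to $z=0$. The dominant monomials change with the direction, so the minimal active linkage classes vary. I would handle this by compactness over the finitely many possible orderings (faces of the Newton polytope of the embedded source complexes). For each ordering, I would use $(\diamond)$ together with complex balance at $c$ to show strict positivity of the leading term, then patch these bounds together.
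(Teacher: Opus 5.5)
\textbf{Verdict: there is a genuine gap.} It is exactly the step you flag as the main obstacle. Step~2 is a plan rather than an argument, and the one concrete mechanism you offer for it fails: a fixed weight vector cannot in general make $H=\alpha\cdot z$ increase near $0$, even for the frozen system.

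Take $\Sigma=\{A,B\}$ and the single embedded class $2A\rightleftharpoons 2B$, $2A\to A+B$, $2B\to A+B$, $A+B\to 3A\to 2A$, with rates $2,1,1,1,2,2$ in that order. It can be realised by this network itself, since $\{A,B\}$ is a siphon. It is weakly reversible, complex balanced at $c=(1,1)$, and satisfies ($\diamond$) because $S_1=\R^2$. One computes
\[
 \dot H=(\alpha_2-\alpha_1)\bigl(5z_A^2-3z_B^2\bigr)+2(2\alpha_1-\alpha_2)z_Az_B-2\alpha_1z_A^3 .
\]
So for every $\alpha\ge0$, $\alpha\neq0$, there are points arbitrarily close to $0$ with $\dot H<0$:
\begin{itemize}
\item where $z_A\ll z_B$, if $\alpha_2>\alpha_1$;
\item where $z_B\ll z_A$, if $\alpha_1>\alpha_2$;
\item where $z_B<z_A^2$, if $\alpha_1=\alpha_2$.
\end{itemize}
The dominant reactions change with the direction of approach ($2A\to 2B$ versus $2B\to 2A$), and the leading terms cancel on tie directions. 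Moreover, ($\diamond$) gives you a non-negative vector in $S_\ell$, not a weight that is positive on the dominant reaction vectors. Choosing weights per ordering and ``patching'' does not produce a single function with $\dot H>0$.

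Your first candidate is in fact the right function. Up to sign and the constant $\sum_jc_j$, $W$ is the paper's $U(z)=\sum_j z_j(1-\log(z_j/c_j))$, and along the frozen flow $\dot U=-\dot W=D(z)\ge0$. Watch the signs: you need $W$ to \emph{decrease}, and an increasing $-\sum_i\log z_i$ pushes $z$ towards $0$, not away. Making $U$ work requires three ingredients your proposal does not contain.

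\begin{enumerate}
\item \emph{Coercivity on $\Sigma$-minimal classes}, $D_\ell\ge\delta A_\ell$ near $0$, with $A_\ell=\max_{a\in L_\ell}z^a/c^a$. Suppose $D_\ell/A_\ell\to0$ along $z_n\to0$. A path argument over the directed diameter, using $h(v)=v-1-\log v$, forces $A_a/A_\ell\to1$ for every $a\in L_\ell$. Hence $u\cdot\log(z_n/c)\to0$ for all $u\in S_\ell$. This contradicts $v\cdot\log(z_n/c)\to-\infty$ for any $0\neq v\in S_\ell\cap\R^{|\Sigma|}_{\ge0}$. This is how ($\diamond$) actually enters the proof.
\item \emph{The dominant monomial lies in a minimal class.} The overall largest monomial is attained at a minimiser of $a\cdot\log(c/z)$, which is a $\Sigma$-minimal complex. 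This gives $D\ge\delta\max_\ell A_\ell$.
\item \emph{Control of the non-frozen error} $E=\dot U-D$. Your Step~1 claim that this error is multiplicatively small relative to the frozen field is false for the entropy, because $D$ arises from cancellation. If the monomials of a class agree up to a factor $1+\epsilon$, the individual terms of $\dot U$ are of size $\epsilon A_\ell$ while $D\approx\epsilon^2A_\ell$. A relative perturbation $\eta$ of the rates then dominates once $\epsilon\ll\eta$. The paper uses $\Phi(x,y)\ge\tfrac89(x-y)^2/(x+y)$ and Cauchy--Schwarz to get $|E|\le\eta D+\eta\sum_\ell(\tfrac98D_\ell\Psi_\ell)^{1/2}$, where $\Psi_\ell=\sum_{r\in L_\ell}q_r(A_{\alpha_r}+A_{\alpha'_r})$. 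It then uses the second ingredient to bound $\Psi_\ell\lesssim D/\delta$ for every class, minimal or not, which yields $|E|\le\eta(1+C)D$.
\end{enumerate}

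Without these three ingredients, the uniform lower bound you invoke in Step~3 is simply assumed.
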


\noindent\textbf{Main result (Theorem~\ref{thm:GAC}).} For a complex  balanced reaction network, suppose that two structural conditions hold at every relevant boundary face: the reactions active on the face account for all stoichiometric directions within it, and each minimal active linkage class of the corresponding embedded network satisfies a non-negative stoichiometric condition. Then every positive trajectory converges to the unique positive equilibrium in its stoichiometric compatibility class.

\medskip

The   terminology will be introduced in Section~\ref{sec:assumptions}. A proof of Theorem~\ref{thm:boundary} is   in Section~\ref{sec:th1.1}.
Note that \eqref{eq:zs} is \emph{not} assumed in Theorem~\ref{thm:boundary}; only the convergence of
the surviving coordinates is.  Lemma~\ref{lem:Sz} and Remark~\ref{rem:R1} do use \eqref{eq:zs}, but they
are not needed for the theorem: they merely supply a sufficient condition for ($\diamond$).
The reaction network in Theorem \ref{thm:boundary} needs only be weakly reversible for the   statement to hold. However, the embedded reaction network is required to be complex balanced.
Section~\ref{sec:frozen} studies boundary equilibria and gives sufficient conditions for complex balance of the frozen embedded reaction network. Section~\ref{sec:gac} shows that the first structural condition reduces any boundary limit set to a single equilibrium (Corollary~\ref{cor:singleton}), then combines this with Theorem~\ref{thm:boundary} to prove global attraction (Theorem~\ref{thm:GAC}). Section~\ref{sec:examples} provides examples.

The conditions used here are related to endotacticity, but do not reduce to it.
Gopalkrishnan, Miller and Shiu~\cite{GopalkrishnanMillerShiu2014} prove GAC for complex balanced reaction networks that are \emph{strongly endotactic}, a requirement on the   reaction network imposed on
every vector $w\in\R^m$ that is not orthogonal to the stoichiometric space.  Condition
($\diamond$) is instead imposed on the embedded reaction network   attached to the siphons that a
compatibility class can actually reach, and only for vectors in the open positive orthant of
$\R^{|\Sigma|}$.  Moreover, hypothesis (a) of Theorem~\ref{thm:GAC} has no counterpart in \cite{GopalkrishnanMillerShiu2014}.  The two
sets of hypotheses are incomparable in both directions: Example~\ref{ex:notse} is weakly
reversible and complex balanced, satisfies the hypotheses of Theorem~\ref{thm:GAC}, and is not
strongly endotactic, while Example~\ref{ex:incomparable} gives a strongly endotactic reaction network for
which the hypotheses of Theorem~\ref{thm:GAC} fail.  The proof below is also of a different kind, using only   an explicit Chetaev function in place of differential inclusions.

\subsection*{Acknowledgement}

My initial idea was to use complex balance of the embedded reaction network to construct a Lyapunov-like or a Chetaev-like function to conclude the GAC validity in some cases of interest. Early on, I   engaged two AI agents from different companies to check examples and go through tedious computations. My conversations with the two AI agents intensified as the research progressed, in particular after I led the agents comment on the other agent's work. The main ideas and simplifications are mine. The AI agents drafted the paper which I completely rewrote. In this process, the AI agents assisted as proof checkers and were allowed to suggest rewordings.
This paper will only appear on arXiv.

\section{Set-up}

Let $\cN=(\cC,\cR)$ be a labelled multi-digraph encoding  an $m$-dimensional  mass-action reaction network with ODE system
\begin{equation*}
 \dot x=\sum_{r\in\cR} k_r x^{y_r}(y'_r-y_r),
 \qquad x=(x_1,\ldots,x_m)\in\R^m_{>0},
\end{equation*}
complexes $y_r,y'_r\in\cC\subseteq \N_0^m$,  and labelled reactions
$$r\colon y_r \stackrel{k_r}{\longrightarrow} y'_r\in \cR,$$
where $k_r>0$ is a positive rate constant (multiple edges between the same two complexes are allowed). The vector $(k_r)_{r\in\cR}$ is referred to as the rate vector. Standard notation $x^n=\prod_{i=1}^m x_i^{n_i}$, $n=(n_1,\ldots,n_m)\in\N_0^m$, is adopted.  We write $u\circ v$ for the component-wise product of two vectors.
See Horn and Jackson~\cite{HornJackson1972} and Feinberg~\cite{Feinberg1987,Feinberg2019} for foundational references, and Anderson and Kurtz~\cite{AndersonKurtz2015} for the stochastic counterpart.

The reaction network $\cN$ is \emph{weakly reversible} if the connected components of the multi-digraph are strongly connected.

Let $\Sigma\subseteq\{1,\ldots,m\}$ be a subset of the indices.    Write
\begin{equation}\label{eq:zeq}
 x=(z,s),\qquad y_r=(\alpha_r,\beta_r),\qquad  y'_r=(\alpha'_r,\beta'_r),
\end{equation}
where $z=(x_i, i\in \Sigma)$, $s=(x_i, i\in\Sigma^c)$, and $y_r,y'_r$ are partitioned similarly.
We write $\pi_\Sigma(x)=z$ and $\pi_{\Sigma^c}(x)=s$ for the two projections, and use the same
symbols for the corresponding projections of subspaces of $\R^m$.  Then,
\begin{align*}
 \dot z&=\sum_{r\in\cR}k_rz^{\alpha_r}s^{\beta_r}(\alpha'_r-\alpha_r),\qquad
 \dot s=\sum_{r\in\cR}k_rz^{\alpha_r}s^{\beta_r}(\beta'_r-\beta_r).
\end{align*}
For fixed $s>0$, the $z$-equation is the mass-action system encoded by the \emph{embedded reaction network} $\mathcal N_\Sigma(s)=(\cC_\Sigma,\cR_\Sigma(s))$ obtained by projecting  onto the species set $\Sigma$ (called the $\Sigma$-projection).
The embedded reaction network has complexes $ \alpha_r,\alpha'_r$, $r\in \cR$, and reactions
$$\alpha_r\stackrel{k_r(s)}{\longrightarrow}\alpha'_r,\qquad  k_r(s)=k_rs^{\beta_r}>0,\qquad r\in\cR.$$
Equal projected complexes are identified in $\cC_\Sigma$ and parallel reactions remain distinct in $\cR_\Sigma(s)$ (so effectively, $\cN_\Sigma(s)$ is a multi-digraph, even if $\cN$ is not). We index projected reactions by their original reaction labels $r\in\cR$. We call this the embedded reaction network \emph{frozen} at $s$.

\section{Assumptions}\label{sec:assumptions}

Let $\cN$ be weakly reversible. An embedded reaction network of $\cN$ is then also weakly reversible by projection. Furthermore,   let $\Sigma\subseteq \{1,\ldots,m\}$ be a non-empty siphon; by (1) and \cite{AngeliDeLeenheerSontag2007}, the zero set of a boundary limiting point is a siphon. Thus, there are no reactions with  $\alpha_r=0$   and  $\alpha'_r\ne0$.

Let
\[
 S=\operatorname{span}\{\alpha'_r-\alpha_r:r\in\cR\}
\]
be the stoichiometric space of $\mathcal N_\Sigma(s)$.

\begin{lemma}\label{lem:Sz}
Under \eqref{eq:zs},
\[
 S^\perp\cap\R^{|\Sigma|}_{\ge0}=\{0\},
 \qquad z(t)\in S,\qquad t\ge0.
\]
Consequently $S\cap\R^{|\Sigma|}_{>0}\neq\varnothing$.
\end{lemma}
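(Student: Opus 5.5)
The plan is to use only that $\dot z(t)$ always lies in $S$, together with the fact that the trajectory is positive and $z(t_n)\to0$. First I would note from the $z$-equation in Section~2,
\[
 \dot z=\sum_{r\in\cR}k_rz^{\alpha_r}s^{\beta_r}(\alpha'_r-\alpha_r),
\]
that for every $t$, $\dot z(t)$ is a linear combination of the vectors $\alpha'_r-\alpha_r$. The coefficients depend on $t$ through both $z$ and $s$, but the directions do not. Hence $\dot z(t)\in S$ for all $t$. Integrating, and using that $S$ is a linear subspace, gives
\[
 z(t)-z(0)\in S\quad\text{for all } t\ge0 .
\]

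Next I would exploit \eqref{eq:zs}. Along the sequence $t_n$ we have $z(t_n)-z(0)\in S$ and $z(t_n)-z(0)\to -z(0)$. Since $S$ is finite-dimensional, hence closed, this gives $z(0)\in S$. Combining with the previous step, $z(t)=z(0)+(z(t)-z(0))\in S$ for all $t\ge0$, which is the second claim.

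For the first claim, let $w\in S^\perp\cap\R^{|\Sigma|}_{\ge0}$. Because $z(t)\in S$, we have $w\cdot z(t)=0$ for all $t$, in particular $w\cdot z(0)=0$. The trajectory is positive, so $z(0)\in\R^{|\Sigma|}_{>0}$. For $w\ge0$, the identity $w\cdot z(0)=0$ then forces $w=0$. Finally, $z(0)$ itself lies in $S\cap\R^{|\Sigma|}_{>0}$, so this set is non-empty. Alternatively, the consequence follows from $S^\perp\cap\R^{|\Sigma|}_{\ge0}=\{0\}$ by Gordan's (Stiemke's) alternative.

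I expect no genuine obstacle. The only point needing care is that the claim is $z(t)\in S$ and not merely $z(t)\in z(0)+S$. This rests entirely on the limit $z(t_n)\to0$ in \eqref{eq:zs} and on the closedness of $S$. The convergence $s(t)\to b$ is not needed here.
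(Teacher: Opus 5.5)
Your proposal is correct and is essentially the paper's argument: $\dot z\in S$, closedness of $S$, and the limit $z(t_n)\to0$ give $z(t)\in S$, and positivity of $z(0)$ then excludes non-zero $w\in S^\perp\cap\R^{|\Sigma|}_{\ge0}$. The only difference is one of order. You derive the first claim from $z(0)\in S$, whereas the paper proves it directly: it observes that $w\cdot z(t_n)=w\cdot z(0)>0$ is constant while $z(t_n)\to0$.
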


\begin{proof}
 Let $(t_n)_n$ be an increasing sequence, such that $z(t_n)\to0$ for $n\to\infty$.
If $0\neq w\in S^\perp\cap\R^{|\Sigma|}_{\ge0}$, then $w\cdot z(t_n)=w\cdot z(0)>0$ is constant and strictly positive, whereas $z(t_n)\to0$, a contradiction.  Also $z(t)-z(t_n)\in S$ for  all $n$.  Closedness of $S$ gives $z(t)\in S$.  Since $z(t)>0$ and $z(t)\in S$, the last assertion is immediate.
\end{proof}

A  linkage class of $\cN$ is called \emph{$\Sigma$-blind} if one of its complexes has zero $\Sigma$-projection. Then all complexes of that linkage class have zero $\Sigma$-projection.  Indeed, if $\alpha_r=0$, the siphon property forces $\alpha'_r=0$, and by weak reversibility it propagates to all complexes in the class.
All remaining linkage classes of $\cN$ are called \emph{$\Sigma$-active}. Complexes are similarly called $\Sigma$-blind and $\Sigma$-active according to the linkage class they belong to. Projection may identify complexes belonging to different original   linkage classes, so several original   classes may merge into a single linkage class of the embedded network.

Consider the embedded reaction network frozen at $s=b$, $\cN_\Sigma(b)$,   and assume it has a positive complex balanced   equilibrium $ c\in\R^{|\Sigma|}_{>0}$.  A simple sufficient condition is that
$\cN_\Sigma(s)$ has deficiency zero: being weakly reversible, it is then complex balanced for every
positive choice of its rate constants, in particular for the frozen ones. Section~\ref{sec:frozen}
gives conditions that do not require deficiency zero.  The   rate constants  are
\begin{equation*}
 k_r(b)=k_r b^{\beta_r}>0,\qquad r\in\cR.
\end{equation*}
For each  reaction $r\in\cR$, put
\begin{equation*}
 q_r:=k_r b^{\beta_r}c^{\alpha_r}>0.
\end{equation*}
At every embedded complex $a\in\cC_\Sigma$, complex balance gives by definition,
\begin{equation}
\label{eq:sumqr}
 \sum_{r\in\cR:\alpha_r=a}q_r=\sum_{r\in\cR:\alpha'_r=a}q_r.
\end{equation}
Thus, $q_r$, $r \in\cR$, is a positive circulation on the embedded reaction multi-digraph.

If there are no $\Sigma$-active linkage classes, then $\dot{z}=0$, and the conclusion of Theorem 1.2 is immediate. Let $L_1,\dots,L_R$ be the $\Sigma$-active embedded linkage classes, and let
\[
 S_\ell=\operatorname{span}\{\alpha'_r-\alpha_r:r\in L_\ell\}
\]
be the stoichiometric space corresponding to $L_\ell$.  Let $\cC^{\mathrm{act}}_\Sigma\subseteq \cC_\Sigma$ be the set
of $\Sigma$-active embedded complexes, ordered component-wise. 
Henceforth assume that at least one such class exists.  A complex
$a\in\cC^{\mathrm{act}}_\Sigma$ is \emph{$\Sigma$-minimal} if there are no $\widehat a\in\cC^{\mathrm{act}}_\Sigma$
with $\widehat a\le a$ and $\widehat a\neq a$, and a $\Sigma$-active embedded linkage class is \emph{$\Sigma$-minimal} if it
contains a $\Sigma$-minimal complex.  As $\cC^{\mathrm{act}}_\Sigma$ is finite and non-empty, at least one
class is $\Sigma$-minimal.  We impose the following structural condition:
\begin{equation*}
 (\diamond)\qquad
 S_\ell\cap\R^{|\Sigma|}_{\ge0}\neq\{0\}
 \quad\text{for every $\Sigma$-minimal }L_\ell.
\end{equation*}

\begin{remark}\label{rem:R1}
If $R=1$, then $S=S_1$ and $L_1$ is $\Sigma$-minimal.  Lemma~\ref{lem:Sz} gives $S_1\cap\R^{|\Sigma|}_{>0}\neq\emptyset$, hence ($\diamond$).  Thus, ($\diamond$) is precisely the additional class-wise requirement introduced when one passes from one active embedded linkage class to several.  Imposing ($\diamond$) on all $\Sigma$-active classes is strictly stronger than imposing it only on $\Sigma$-minimal classes; see Example~\ref{ex:minimalmatters}.
\end{remark}

\begin{remark}
By Stiemke's alternative~\cite{Stiemke1915},
\[
 S_\ell\cap\R^{|\Sigma|}_{\ge0}=\{0\}
\]
if and only if there is $p\in\R^{|\Sigma|}_{>0}$ with $p\perp S_\ell$.  Thus ($\diamond$) says that no $\Sigma$-minimal embedded linkage class has a strictly positive conservation law of its own.     A  simple sufficient condition is that the class contains two  complexes $a, \widehat a$ with $a\ge \widehat a$ (component-wise) and $a\neq \widehat a$.
\end{remark}

Equivalently, ($\diamond$) holds for $L_\ell$ if and only if for every $\theta\in\R^{|\Sigma|}_{>0}$,
some reaction $r\in L_\ell$, whose source complex minimises $a\cdot\theta$ over $L_\ell$, satisfies
$(\alpha'_r-\alpha_r)\cdot\theta>0$.  Indeed, by the previous remark, ($\diamond$) says that no
$\theta>0$ is orthogonal to $S_\ell$, that is, $a\cdot\theta$ is non-constant on $L_\ell$. As
$L_\ell$ is strongly connected,  a directed path from a minimiser of $a\cdot\theta$ to a non-minimiser contains a reaction $a\to a'$, such that $a$  is a minimiser while $a'$ is not.

With $w=-\theta$, this is the pair of conditions ``$w\cdot y>w\cdot y'$ and $y$ is $w$-maximal''
occurring in the definition of a strongly endotactic reaction network of Gopalkrishnan, Miller and
Shiu~\cite{GopalkrishnanMillerShiu2014}, with maximality taken within each $L_\ell$, rather than over all
complexes, and only for vectors $w\in-\R^{|\Sigma|}_{>0}$.  The remaining requirement of that
definition, endotacticity, holds automatically here, since $\cN_\Sigma$ is weakly reversible. Examples illustrating the difference can be found in Section~\ref{sec:examples}.

\section{A Chetaev-type function}

For a  complex $a\in\cC_\Sigma$ of the embedded reaction network $\cN_\Sigma(b)$, let
\begin{equation*}
 A_a(z)=\frac{z^a}{c^a},
\end{equation*}
where $c>0$ is a complex balanced equilibrium of $\cN_\Sigma(b)$. Define
\begin{equation}\label{eq:U}
 U(z)=\sum_{j\in\Sigma}z_j
 \left(1-\log\frac{z_j}{c_j}\right).
\end{equation}
For $z>0$ sufficiently close to $0$, $U(z)>0$, and $U(z)\to0$ as $z\to0$. We will show that $U$ is a Chetaev-type function, that is, the derivative of $U$ along a trajectory is positive near $0$, $\dot U>0$~\cite{Rumyantsev2001}. Hence, a trajectory cannot approach $(0,b)$.

Multiplying the complex balance identity \eqref{eq:sumqr} by $A_a(z)$ and summing over all embedded complexes $a\in\cC_\Sigma$ yields
\begin{equation}\label{eq:qr}
 \sum_{r\in\cR}q_r A_{\alpha_r}(z)
 =\sum_{r\in\cR}q_r A_{\alpha'_r}(z).
\end{equation}
Indeed, on each side the sum over complexes may be interchanged with the finite sum over reactions.  The \emph{dissipation} of the frozen embedded system is
\begin{align}\label{eq:4.3}
 D(z)
 &=\sum_{r\in\cR}q_r A_{\alpha_r}
 \log\frac{A_{\alpha_r}}{A_{\alpha'_r}}=\sum_{r\in\cR}q_r
 \left[
 A_{\alpha_r}\log\frac{A_{\alpha_r}}{A_{\alpha'_r}}
 -A_{\alpha_r}+A_{\alpha'_r}
 \right]\ge0,
\end{align}
where the second equality follows from \eqref{eq:qr}, and the inequality follows from $u\log(u/v)-u+v\ge0$ for $u,v>0$.

For each linkage class $L_\ell$, set
\begin{align}
 D_\ell(z)
 &=\sum_{r\in L_\ell}q_r
 \left[
 A_{\alpha_r}\log\frac{A_{\alpha_r}}{A_{\alpha'_r}}
 -A_{\alpha_r}+A_{\alpha'_r}
 \right], \label{eq:DA}
 \\
 A_\ell(z)&=\max_{a\in L_\ell}A_a(z).
\nonumber 
\end{align}
The $\Sigma$-blind reactions have $\alpha_r=\alpha'_r=0$ and contribute zero to $D$.  Hence,
\begin{equation*}
 D=\sum_{\ell=1}^R D_\ell.
\end{equation*}
The following technical lemma will be useful.

\begin{lemma}
Let $h(v)=v-1-\log v,$ for $ v>0$.
For $u\ge0$, let $\psi(u)\in(0,1]$ be the inverse of $h$ on $(0,1]$, that is, $h(v)=u$.  Then, $\psi$ is decreasing, $\psi(0)=1$, and
\begin{equation}
 \psi(u)\ge1-2\sqrt u,
 \qquad u\ge0.
 \label{eq:5.1}
\end{equation}
\end{lemma}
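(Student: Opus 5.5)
We will first record the elementary properties of $h(v)=v-1-\log v$ on $(0,1]$, and then prove \eqref{eq:5.1} by a comparison argument that goes through the inverse function.

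For the basic properties: $h'(v)=1-1/v<0$ on $(0,1)$, $h(1)=0$, and $h(v)\to\infty$ as $v\downarrow0$. Hence $h$ is a continuous strictly decreasing bijection from $(0,1]$ onto $[0,\infty)$. Its inverse $\psi\colon[0,\infty)\to(0,1]$ is therefore well defined, strictly decreasing and continuous, with $\psi(0)=1$.

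For \eqref{eq:5.1}, first dispose of the case $u\ge1/4$. There $1-2\sqrt u\le0<\psi(u)$, so nothing needs proving. For $0\le u<1/4$, put $v_0=1-2\sqrt u\in(0,1]$. Since $h$ is decreasing on $(0,1]$, the inequality $\psi(u)\ge v_0$ is equivalent to $h(v_0)\ge u$, i.e.\ $h(\psi(u))\ge h(v_0)$ fails in the wrong direction exactly when $v_0>\psi(u)$. So it suffices to show
\[
 h(1-t)\ge t^2/4,\qquad t=2\sqrt u\in[0,1).
\]
Write $g(t)=h(1-t)=-t-\log(1-t)=\sum_{k\ge2}t^k/k$. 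Every term of this series is non-negative, so $g(t)\ge t^2/2\ge t^2/4$. Substituting $t=2\sqrt u$ gives $h(v_0)\ge 2u\ge u$, and hence $\psi(u)\ge v_0=1-2\sqrt u$.

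The only real subtlety is the direction of the inequalities when passing through the decreasing inverse. The argument above uses the equivalence $v_0\le\psi(u)\iff h(v_0)\ge h(\psi(u))=u$, valid for $v_0\in(0,1]$, and this is why the case $v_0\le0$ had to be separated off first. The power-series bound is routine. The proof in fact gives the sharper constant $\sqrt2$ in place of $2$, but the stated bound suffices.
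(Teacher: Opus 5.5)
Your proof is correct. It rests on the same key inequality as the paper's, namely $h(1-t)=\sum_{k\ge2}t^k/k\ge t^2/2$, obtained by expanding the logarithm. The paper instead substitutes $v=\psi(u)$ directly into $h(v)\ge(1-v)^2/2$ to get $u\ge(1-\psi(u))^2/2$, i.e.\ $1-\psi(u)\le\sqrt{2u}$; this avoids your case split at $u\ge1/4$ and the somewhat garbled monotonicity inversion through $v_0$.
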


\begin{proof}
Since $h'(v)=1-v^{-1}<0$ on $(0,1)$ and $h(1)=0$, then $\psi$ is decreasing and $\psi(0)=1$. Furthermore,
by expanding the logarithm,  $h(v)\ge(1-v)^2/2$ on $(0,1]$. With $v=\psi(u)$ this yields $1-\psi(u)\le\sqrt{2u}\le2\sqrt u$, proving the claim.
\end{proof}

For a strongly connected multi-digraph, the \emph{directed diameter}  is the maximum length of  shortest directed paths between any two complexes. If the graph has one vertex, the directed diameter is zero, otherwise $d_\ell\ge1$.

The next lemma and condition ($\diamond$) are key to obtain a bound on the ratio of $D_\ell$ over $A_\ell$.

\begin{lemma}
\label{lem:uniformprop}
Fix an embedded linkage class $L_\ell$ with directed diameter $d_\ell$.
\[
 q_*:=\min_{r\in L_\ell}q_r,
 \qquad
 \varepsilon:=\frac{D_\ell(z)}{q_* A_\ell(z)}.
\]
If $d_\ell=0$, the conclusion holds trivially. If $d_\ell\ge1$ and $\varepsilon\le(4d_\ell)^{-2}$, then
\begin{equation*}
 1\ge\frac{A_a(z)}{A_\ell(z)}
 \ge1-2d_\ell\sqrt\varepsilon,
 \qquad a\in L_\ell.
\end{equation*}
\end{lemma}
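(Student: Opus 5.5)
The upper bound $A_a(z)/A_\ell(z)\le1$ holds by the definition of $A_\ell$ as a maximum, so only the lower bound needs proof. The plan is to turn the smallness of the dissipation $D_\ell$ into a per-edge bound on the ratio $A_{\alpha'_r}/A_{\alpha_r}$. Then propagate that bound backwards along directed paths from a maximising complex, using strong connectivity of $L_\ell$ (weak reversibility) and the diameter $d_\ell$.

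\textbf{Step 1 (per-edge bound).} Every summand in \eqref{eq:DA} is non-negative, so for each $r\in L_\ell$,
\[
 q_*\,A_{\alpha_r}\,h\!\left(\frac{A_{\alpha'_r}}{A_{\alpha_r}}\right)\le q_r\Bigl[A_{\alpha_r}\log\frac{A_{\alpha_r}}{A_{\alpha'_r}}-A_{\alpha_r}+A_{\alpha'_r}\Bigr]\le D_\ell(z)=\varepsilon q_*A_\ell(z).
\]
Here I use $u\log(u/v)-u+v=u\,h(v/u)$. Hence $h(A_{\alpha'_r}/A_{\alpha_r})\le\varepsilon A_\ell/A_{\alpha_r}$.

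This bound is only useful when $A_{\alpha_r}$ is comparable to $A_\ell$. This is the main obstacle: the estimate must be propagated from the maximiser so that each source along the path is already known to be large. For this reason I will walk paths ending at the maximiser, not starting from it.

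\textbf{Step 2 (backward propagation).} Let $a^*\in L_\ell$ attain $A_\ell(z)=A_{a^*}$. Fix $a\in L_\ell$ and a shortest directed path $a=a_0\to a_1\to\cdots\to a_k=a^*$ with $k\le d_\ell$. Set $v_j=A_{a_j}/A_\ell\in(0,1]$.

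For the edge $a_{j}\to a_{j+1}$, Step 1 gives $v_{j}\,h(v_{j+1}/v_{j})\le\varepsilon$. I will prove by downward induction on $j$ that
\[
 v_j\ge 1-2(k-j)\sqrt\varepsilon\ \ge\tfrac12 .
\]
The final inequality holds because $2d_\ell\sqrt\varepsilon\le\tfrac12$ under the hypothesis $\varepsilon\le(4d_\ell)^{-2}$.

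For the inductive step, first rule out $v_j$ small: if $v_j<\tfrac12$ then $v_{j+1}/v_j>1$, and the ratio being large while $v_j h(\cdot)\le\varepsilon$ gives a contradiction (details below). Once $v_j\ge\tfrac12$, two cases arise:
\begin{itemize}
\item If $v_j\ge v_{j+1}$, the bound for $v_j$ is inherited from $v_{j+1}$.
\item Otherwise the ratio $\rho=v_{j+1}/v_j>1$ satisfies $h(\rho)\le 2\varepsilon$. Since $h(\rho)\ge h(1/\rho)$-type comparisons are awkward, I would instead use $h(\rho)\ge(\rho-1)^2/(2\rho)$ for $\rho\ge1$, giving $\rho-1\lesssim\sqrt\varepsilon$. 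The resulting loss $v_{j+1}-v_j\le v_{j+1}(1-1/\rho)$ is at most $2\sqrt\varepsilon$ once constants are checked. Alternatively, I can write $v_j = v_{j+1}\cdot(1/\rho)$ and apply the Lemma's bound \eqref{eq:5.1} in the form $1/\rho\ge1-2\sqrt{\cdot}$ with an adjusted argument.
\end{itemize}

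To avoid circularity in ruling out small $v_j$, I will do a continuity-free direct argument. The function $v\mapsto v\,h(v_{j+1}/v)=v_{j+1}-v-v\log(v_{j+1}/v)$ is decreasing in $v$ on $(0,v_{j+1}]$ (its derivative is $\log(v/v_{j+1})\le0$). So the constraint $\le\varepsilon$ forces $v_j\ge v_-$, the smallest root. This root can then be bounded below by $v_{j+1}-2\sqrt{\varepsilon}$ using $v\,h(v_{j+1}/v)\ge (v_{j+1}-v)^2/(2v_{j+1})$ and $v_{j+1}\le1$.

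\textbf{Step 3 (conclusion).} Summing the per-step losses of at most $2\sqrt\varepsilon$ over $k\le d_\ell$ edges gives $A_a/A_\ell=v_0\ge1-2d_\ell\sqrt\varepsilon$ for every $a\in L_\ell$, which is the claimed lower bound.
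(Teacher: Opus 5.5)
Your proof is correct, but it runs in the opposite direction to the paper's.

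The paper propagates forward from a maximiser $a_0$. On an edge $a\to a'$ it reads the per-edge inequality $h(B_{a'}/B_a)\le\varepsilon/B_a$ with the source already controlled, obtaining $B_{a'}\ge B_a\psi(\varepsilon/B_a)$. It then needs the monotone map $f(t)=t\psi(\varepsilon/t)$, the iterates $\gamma_i$, and the bound $\psi(u)\ge1-2\sqrt u$.

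You instead walk paths into the maximiser and bound the source of each edge by its target. You use directly that $x\log(x/y)-x+y\ge(y-x)^2/(2y)$ for $0<x\le y$. This gives the additive step $v_j\ge v_{j+1}-\sqrt{2v_{j+1}\varepsilon}\ge v_{j+1}-\sqrt{2\varepsilon}$. It needs no inverse function and no monotonicity argument. It does not even use $v_j\ge\frac12$ or the hypothesis $\varepsilon\le(4d_\ell)^{-2}$, since the claimed bound is trivial when its right-hand side is negative.

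Your stated motivation, however, is backwards. On a path ending at the maximiser, it is the target of each edge that is known to be large, not the source. The divided form $h(v_{j+1}/v_j)\le\varepsilon/v_j$ is useless there precisely because $v_j$ is not yet controlled. What makes the backward direction work is that you keep the undivided quantity $v_j\,h(v_{j+1}/v_j)$, which also penalises $v_j\ll v_{j+1}$.

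The last paragraph of your Step 2 already settles the case $v_j<v_{j+1}$ on its own. The preliminary ``rule out $v_j<\frac12$'' step, the $\rho$-bullet and the ``alternatively'' remark are therefore redundant and should be removed.
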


\begin{proof}
Fix $z>0$. Since $L_\ell$ is finite, choose a complex $a_0\in L_\ell$ with
$A_{a_0}(z)=A_\ell(z)$. Thus $B_{a_0}=1$, where $B_a:=A_a/A_\ell\in(0,1]$.
If $d_\ell=0$, the linkage class has one complex and the conclusion is immediate. Hence assume $d_\ell\ge1$.
For each reaction $a\to a'$ in $L_\ell$, non-negativity of the summands in \eqref{eq:DA} gives
\[
 q_* B_a h(B_{a'}/B_a)
 \le \frac{D_\ell}{A_\ell}=q_*\varepsilon.
\]
Thus, $h(B_{a'}/B_a)\le\varepsilon/B_a$. Since $h$ is strictly decreasing
on $(0,1]$, and its value on $[1,\infty)$ is non-negative, then either
$B_{a'}/B_a\ge1$, or by definition of $\psi$ (the inverse of $h$ on $(0,1]$),
\begin{equation}\label{eq:5.3}
 B_{a'}\ge B_a\psi(\varepsilon/B_a). 
\end{equation}
Let $f(t):=t\psi(\varepsilon/t)$ for $t>0$, which is increasing: if
$t_1\le t_2$, then $\psi(\varepsilon/t_1)\le\psi(\varepsilon/t_2)$,
so $f(t_1)\le f(t_2)$. Define
\[
 \gamma_0=1,\qquad \gamma_{i+1}=f(\gamma_i)
 =\gamma_i\psi(\varepsilon/\gamma_i).
\]
We show by induction that every complex reachable from $a_0$ by a path
of length $i$ satisfies $B_a\ge\gamma_i$. For $i=0$, this is
$B_{a_0}=\gamma_0=1$. If the assertion holds for a path ending at $a$,
then \eqref{eq:5.3} and monotonicity of $f$ give, for the next edge $a\to a'$, that
\[
 B_{a'}\ge f(B_a)\ge f(\gamma_i)=\gamma_{i+1}.
\]
Furthermore, $\gamma_i\le1$ for every $i$. If $\gamma_i\ge1/2$,
then $\varepsilon/\gamma_i\le2\varepsilon\le1/8$, so \eqref{eq:5.1} gives
\[
 \gamma_{i+1}\ge\gamma_i-2\sqrt{\varepsilon\gamma_i}
 \ge\gamma_i-2\sqrt\varepsilon.
\]
Inductively, for $0\le i\le d_\ell$,
\[
 \gamma_i\ge1-2i\sqrt\varepsilon\ge\frac12,
\]
using $\varepsilon\le(4d_\ell)^{-2}$. Every complex can be reached
from $a_0$ by a directed path of length at most $d_\ell$.
Since $f(t)\le t$, the sequence $(\gamma_i)_{i=0,\ldots,d_\ell}$ is non-increasing, and
therefore $B_a\ge\gamma_{d_\ell}\ge1-2d_\ell\sqrt\varepsilon$.
\end{proof}

\begin{proposition}
\label{prop:coercivity}
Assume that the  embedded reaction network $\cN_\Sigma(b)$ frozen at $s=b$ is complex balanced with positive equilibrium $c>0$, and assume condition ($\diamond$).  Then, there exist $\rho\in(0,\min_{j\in\Sigma}c_j)$ and $\delta>0$, such that
\begin{equation}\label{eq:5.4}
 D_\ell(z)\ge\delta A_\ell(z)
 \qquad\text{for every $\Sigma$-minimal }L_\ell,
\end{equation}
and therefore
\begin{equation}\label{eq:5.5}
 D(z)\ge\delta\max_{1\le\ell\le R} A_\ell(z),
\end{equation}
for every $z\in\R^{|\Sigma|}_{>0}$ with $\norm{z}\le\rho$.
\end{proposition}

\begin{proof}
Choose a $\Sigma$-minimal $L_\ell$.  If no estimate of the form \eqref{eq:5.4} holds in any neighbourhood of $0$, there would exist $z_n\in\R^{|\Sigma|}_{>0}$ with $z_n\to0$ for $n\to\infty$, and
\[
 \frac{D_\ell(z_n)}{A_\ell(z_n)}\to0.
\]
From Lemma~\ref{lem:uniformprop} we then have
\[
 \frac{A_a(z_n)}{A_\ell(z_n)}\to1,
 \qquad a\in L_\ell.
\]
Let
\[
 \xi(z)=\log z-\log c.
\]
Then, for every reaction $a\to a'$ in $L_\ell$,
\[
 (a'-a)\cdot\xi(z_n)
 =\log\frac{A_{a'}(z_n)}{A_a(z_n)}\to0.
\]
Hence, $u\cdot\xi(z_n)\to0$ for every $u\in S_\ell$.  By ($\diamond$), choose $0\neq v\in S_\ell\cap\R^{|\Sigma|}_{\ge0}$.  Since every coordinate of $z_n$ tends to zero while $c>0$ is fixed,
\[
 v\cdot\xi(z_n)\to-\infty,
\]
a contradiction.  Thus, for each $\Sigma$-minimal $\ell$, there are $\rho_\ell,\delta_\ell>0$ for which \eqref{eq:5.4} holds.  Finiteness of the $\Sigma$-minimal classes allows $\rho=\min_{\ell}\rho_\ell$ and $\delta=\min_{\ell}\delta_\ell$, the minima being over the $\Sigma$-minimal classes. Shrinking $\rho$ further, we may assume $\rho<\min_{j\in\Sigma}c_j$, so that $z<c$ component-wise whenever $\norm z\le\rho$.

For such $z$, put $\theta=-\xi(z)=\log(c/z)\in\R^{|\Sigma|}_{>0}$, so that $A_a(z)=e^{-a\cdot\theta}$ and
\[
 \max_{1\le\ell\le R}A_\ell(z)
 =\max_{a\in\cC^{\mathrm{act}}_\Sigma}A_a(z)=e^{-\mu},
 \qquad
 \mu=\min_{a\in\cC^{\mathrm{act}}_\Sigma}a\cdot\theta .
\]
Any complex $a^*$ attaining $\mu$ is $\Sigma$-minimal: if $a\in\cC^{\mathrm{act}}_\Sigma$ satisfies
$a\le a^*$ and $ a\neq a^*$, then $a^*- a\ge0$ is non-zero, and $\theta>0$  gives $ a\cdot\theta<\mu$, contradicting the definition of $\mu$.  Hence, $a^*$ lies in a $\Sigma$-minimal class $L_{\ell^*}$ with
$A_{\ell^*}(z)=e^{-\mu}$. Equation  \eqref{eq:5.4} applied to $L_{\ell^*}$ gives
$\delta A_{\ell^*}(z)\le D_{\ell^*}(z)\le D(z)$, which is \eqref{eq:5.5}.
\end{proof}

Condition ($\diamond$) enters the argument below only through the bounds \eqref{eq:5.4} in Proposition~\ref{prop:coercivity}.  We next give four lemmas showing that $U$ in \eqref{eq:U} is strictly increasing near the boundary, that is, $\dot U>0$.

For $s$ near $b>0$, define
\begin{equation*}
 \rho_r(s):=\frac{s^{\beta_r}}{b^{\beta_r}},
 \qquad
 \eta(s):=\max_{r\in\cR}|\rho_r(s)-1|.
\end{equation*}
Then, $\eta(s)\to0$ as $s\to b$.

\begin{lemma}
Let $U$ be as in \eqref{eq:U}. Along any positive trajectory,
\begin{equation*}
 \dot U
 =\sum_{r\in\cR}q_r\rho_r(s)A_{\alpha_r}
 \log\frac{A_{\alpha_r}}{A_{\alpha'_r}}
 =D(z)+E(z,s),
\end{equation*}
where
\begin{equation}
 E(z,s)=\sum_{r\in\cR}q_r(\rho_r(s)-1)A_{\alpha_r}
 \log\frac{A_{\alpha_r}}{A_{\alpha'_r}}.
 \label{eq:6.3}
\end{equation}
\end{lemma}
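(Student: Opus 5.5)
The plan is to differentiate $U$ directly along the $z$-equation and then rewrite the resulting sum in terms of $q_r$, $\rho_r(s)$ and the ratios $A_a$. Since $\partial U/\partial z_j=1-\log(z_j/c_j)-1=-\log(z_j/c_j)=-\xi_j(z)$, we have
\[
 \dot U=-\xi(z)\cdot\dot z=-\sum_{r\in\cR}k_r z^{\alpha_r}s^{\beta_r}\,(\alpha'_r-\alpha_r)\cdot\xi(z).
\]

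The first step rewrites the coefficient. Using $k_r s^{\beta_r}=k_r b^{\beta_r}\rho_r(s)$ and $z^{\alpha_r}=c^{\alpha_r}A_{\alpha_r}(z)$, we get $k_r z^{\alpha_r}s^{\beta_r}=q_r\rho_r(s)A_{\alpha_r}(z)$, by the definition of $q_r$.

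The second step rewrites the logarithmic factor. Since $(\alpha'_r-\alpha_r)\cdot\xi(z)=\log\bigl(A_{\alpha'_r}/A_{\alpha_r}\bigr)$, multiplying by $-1$ gives $\log\bigl(A_{\alpha_r}/A_{\alpha'_r}\bigr)$. This yields the first displayed equality,
\[
 \dot U=\sum_{r\in\cR}q_r\rho_r(s)A_{\alpha_r}\log\frac{A_{\alpha_r}}{A_{\alpha'_r}}.
\]

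For the decomposition, write $\rho_r(s)=1+(\rho_r(s)-1)$. The part with coefficient $1$ is exactly the first expression for $D(z)$ in \eqref{eq:4.3}, and the remainder is $E(z,s)$ as in \eqref{eq:6.3}. The $\Sigma$-blind reactions contribute zero throughout, since for them $\alpha_r=\alpha'_r=0$ and the logarithm vanishes.

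There is no real obstacle here. The one point to check is that every quantity is well defined: the trajectory is positive, so $z>0$ and $s>0$, and hence $\rho_r(s)$, $A_a(z)$ and the logarithms are finite. The computation should also use the first form of $D$ in \eqref{eq:4.3}, not the form with the $-A_{\alpha_r}+A_{\alpha'_r}$ terms. Complex balance, through \eqref{eq:qr}, is needed only to identify the two forms of $D$. It is not needed for $E$, which is why $E$ keeps the plain $A\log(A/A')$ form.
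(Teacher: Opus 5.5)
Your proposal is correct and follows the paper's proof essentially step for step. You compute $\partial U/\partial z_j=-\log(z_j/c_j)$, substitute the $z$-equation, rewrite $k_rz^{\alpha_r}s^{\beta_r}=q_r\rho_r(s)A_{\alpha_r}$ and $(\alpha'_r-\alpha_r)\cdot\log(z/c)=\log(A_{\alpha'_r}/A_{\alpha_r})$, and then split $\rho_r(s)=1+(\rho_r(s)-1)$ against the first expression for $D$ in \eqref{eq:4.3}.
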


\begin{proof}
Differentiating   $U$ in \eqref{eq:U} gives
\[
 \frac{\partial U}{\partial z_j}=-\log\frac{z_j}{c_j}.
\]
Substitute the $z$-equation \eqref{eq:zeq} into $\dot U$, and use
$(\alpha'_r-\alpha_r)\cdot\log(z/c)
 =\log(A_{\alpha'_r}/A_{\alpha_r})$ together with
$k_r z^{\alpha_r}s^{\beta_r}=q_r\rho_r(s)A_{\alpha_r}$.
The claimed identity follows by separating $\rho_r(s)=1+(\rho_r(s)-1)$ and using the first expression for $D$ in \eqref{eq:4.3}. Reactions in $\Sigma$-blind linkage classes contribute zero since $\alpha_r=\alpha'_r=0$.
\end{proof}

For a $\Sigma$-active linkage class $L_\ell$, define
\begin{equation*}
 \Theta_\ell=\sum_{r\in L_\ell}q_r|A_{\alpha_r}(z)-A_{\alpha'_r}(z)|,
 \qquad
 \Psi_\ell=\sum_{r\in L_\ell}q_r(A_{\alpha_r}(z)+A_{\alpha'_r}(z)).
\end{equation*}

\begin{lemma}\label{lem:sharppert}
We have
\begin{equation}
 |E(z,s)|\le \eta(s)D(z)+\eta(s)\sum_{\ell=1}^R\Theta_\ell(z),
 \label{eq:6.5}
\end{equation}
and, for every $\Sigma$-active linkage  class,
\begin{equation}
 \Theta_\ell(z)
 \le \left(\frac98D_\ell(z)\,\Psi_\ell(z)\right)^{1/2}.
 \label{eq:6.6}
\end{equation}
\end{lemma}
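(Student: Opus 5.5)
Both inequalities are proved term by term on the reactions, with $u=A_{\alpha_r}$ and $v=A_{\alpha'_r}$. Each summand of $E$ is compared with the corresponding non-negative summand of $D$ in \eqref{eq:4.3}.

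For \eqref{eq:6.5}, we use the identity
\[
u\log\frac uv=\Bigl[u\log\frac uv-u+v\Bigr]+(u-v).
\]
Substituting it into \eqref{eq:6.3}, each term of $E$ is $q_r(\rho_r(s)-1)$ times a non-negative bracket plus $q_r(\rho_r(s)-1)(A_{\alpha_r}-A_{\alpha'_r})$. Bounding $|\rho_r(s)-1|\le\eta(s)$ and using the triangle inequality gives two contributions. Summing the brackets over all $r$ gives exactly $D(z)$ by the second expression in \eqref{eq:4.3}. The differences contribute at most $\eta(s)\sum_r q_r|A_{\alpha_r}-A_{\alpha'_r}|$. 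The $\Sigma$-blind reactions have $A_{\alpha_r}=A_{\alpha'_r}=1$, so they contribute zero to both sums. The remaining sum therefore splits over the active classes as $\sum_\ell\Theta_\ell$, which proves \eqref{eq:6.5}.

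For \eqref{eq:6.6}, the key step is the scalar inequality
\[
(u-v)^2\le \tfrac98\,(u+v)\Bigl[u\log\frac uv-u+v\Bigr],\qquad u,v>0 .
\]
Given this, Cauchy--Schwarz with weights $q_r$ finishes the proof:
\[
\Theta_\ell=\sum_{r\in L_\ell} q_r|u-v|\le\Bigl(\sum_{r\in L_\ell} q_r(u+v)\Bigr)^{1/2}\Bigl(\sum_{r\in L_\ell} q_r\frac{(u-v)^2}{u+v}\Bigr)^{1/2}\le\bigl(\Psi_\ell\cdot\tfrac98 D_\ell\bigr)^{1/2}.
\]

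The main obstacle is the scalar inequality. By homogeneity, set $v=1$ and $u=t$. The claim becomes
\[
g(t):=\tfrac98(1+t)(t\log t-t+1)-(t-1)^2\ge0,\qquad t>0.
\]
Both terms vanish to second order at $t=1$, and $t\log t-t+1\sim (t-1)^2/2$ there. The leading coefficient of $g$ at $t=1$ is therefore $\tfrac98\cdot 2\cdot\tfrac12-1=\tfrac18>0$.

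At the endpoints the inequality also holds. As $t\to0$, $g\to\tfrac98-1>0$. As $t\to\infty$, $g$ grows like $\tfrac98t^2\log t$, which dominates $t^2$.

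To make the bound global, I would first try to establish a known sharper bound. One candidate is $t\log t-t+1\ge \frac{(t-1)^2}{1+t}$ for all $t>0$; a weaker factor than $1$ would also suffice. Another route is the reverse form $\frac{(t-1)^2}{t+1}\le\frac98(t\log t-t+1)$.

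Proving $t\log t-t+1\ge (t-1)^2/(1+t)$ directly reduces to $\log t\ge 2(t-1)/(t+1)$ on $t\ge1$, after dividing suitably and differentiating. Writing $\phi(t)=(1+t)(t\log t-t+1)-(t-1)^2$, we have $\phi(1)=\phi'(1)=0$ and
\[
\phi''(t)=2\log t+1/t-1.
\]
This is not sign-definite, since it is negative near $t=0.3$. Hence the constant $1$ may fail near $0$, which is why the paper allows $\tfrac98$. With the $\tfrac98$ factor I would check convexity-type estimates separately on $(0,1]$ and $[1,\infty)$, if necessary by a numeric minimum check of $g$ on a compact interval.
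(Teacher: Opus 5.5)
Your proof of \eqref{eq:6.5} (splitting $u\log(u/v)=\Phi(u,v)+(u-v)$ with $\Phi(u,v)=u\log(u/v)-u+v$) and your weighted Cauchy--Schwarz step for \eqref{eq:6.6} are exactly the paper's proof. The paper likewise just quotes $\Phi(x,y)\ge\frac89\frac{(x-y)^2}{x+y}$ as elementary. The gap is that you never prove this scalar inequality, although you correctly single it out as the crux.

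The second-order expansion at $t=1$ and the limits as $t\to0,\infty$ say nothing about the interior of $(0,1)$, and that is exactly where the inequality is tight. At $t=1/4$ one has $g(1/4)\approx0.0048$, while each of its two terms is about $0.56$. The best constant that can replace $\frac89$ is only about $0.896$, attained near $t\approx0.22$. Three further points:
\begin{itemize}
\item Your first candidate $t\log t-t+1\ge(t-1)^2/(1+t)$ is false for $t<1$: at $t=1/4$, $(1+t)(t\log t-t+1)\approx0.504<0.5625$.
\item The ``reverse form'' is the same inequality rearranged, not a second route.
\item With this little slack, a ``numeric minimum check'' would need rigorous error control to count as a proof.
\end{itemize}

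One clean way to close it is the substitution $u=\frac{1-t}{1+t}\in(-1,1)$. Then $t=\frac{1-u}{1+u}$ and $\log t=-2\operatorname{artanh}u$, so
\[
 (1+t)(t\log t-t+1)=\frac{4}{(1+u)^2}\bigl[u-(1-u)\operatorname{artanh}u\bigr],
 \qquad
 (1-t)^2=\frac{4u^2}{(1+u)^2}.
\]
The claim becomes $u-(1-u)\operatorname{artanh}u\ge\frac89u^2$. Write $\operatorname{artanh}u=u+R(u)$ with $R(u)=\sum_{j\ge1}\frac{u^{2j+1}}{2j+1}$; the left side then equals $u^2-(1-u)R(u)$.
\begin{itemize}
\item For $u\le0$ (that is, $t\ge1$), $R(u)\le0$, so the claim holds even with constant $1$.
\item For $0\le u<1$, the bound $R(u)\le\frac{u^3}{3}+\frac{u^5}{5(1-u^2)}$ gives
\[
 u^2-(1-u)R(u)-\tfrac89u^2\ \ge\ \frac{u^2\,(6u^3-10u+5)}{45(1+u)} .
\]
On $[0,\infty)$ the cubic satisfies $6u^3-10u+5\ge5-\frac{20\sqrt5}{9}$, and this is positive because $\sqrt5<\frac94$.
\end{itemize}

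If you only need some constant, there is a shorter argument. Taylor's formula, with $\frac{d^2}{d\tau^2}(\tau\log\tau-\tau+1)=1/\tau\ge1/\max(t,1)$ for $\tau$ between $1$ and $t$, gives
\[
 t\log t-t+1\ \ge\ \frac{(t-1)^2}{2\max(t,1)}\ \ge\ \frac{(t-1)^2}{2(t+1)} .
\]
Hence $\Theta_\ell\le(2D_\ell\Psi_\ell)^{1/2}$. This suffices for Lemma~\ref{lem:pertcontrol}, but it does not give the constant $\frac98$ as stated.
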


\begin{proof}
For $x,y>0$, let $\Phi(x,y)=x\log(x/y)-x+y\ge0,$ and  define
\begin{equation*}
 x_r=A_{\alpha_r}(z),\qquad y_r=A_{\alpha'_r}(z)
\end{equation*}
for convenience.
Using $x\log(x/y)=\Phi(x,y)+(x-y)$ in \eqref{eq:6.3}, yields
\[
 E=\sum_{r\in\cR} q_r(\rho_r-1)\Phi(x_r,y_r)
   +\sum_{r\in\cR} q_r(\rho_r-1)(x_r-y_r).
\]
The first sum has absolute value at most
$\eta\sum_{r\in\cR} q_r\Phi(x_r,y_r)=\eta D$. The second is bounded
by $\eta\sum_{\ell=1}^R\Theta_\ell$. Blind reactions contribute
zero to both sums. This proves \eqref{eq:6.5}. For \eqref{eq:6.6}, use the elementary inequality
\begin{equation*}
 \Phi(x,y)\ge \frac89\frac{(x-y)^2}{x+y}, \qquad x,y>0,
\end{equation*}
and Cauchy--Schwarz,
\[ \Theta_\ell
 =\sum_{r\in L_\ell}
 \left(q_r\frac{(x_r-y_r)^2}{x_r+y_r}\right)^{1/2}
 \left(q_r(x_r+y_r)\right)^{1/2}
 \le \left(\frac98D_\ell\Psi_\ell\right)^{1/2}.\]
\end{proof}

The next lemma makes   use of condition ($\diamond$).

\begin{lemma}\label{lem:pertcontrol}
Under the assumptions of Proposition~\ref{prop:coercivity}, there are $\rho>0$
and $C>0$ such that, for $0<\norm z<\rho$ and $s>0$,
\begin{equation}\label{eq:pertcontrol}
 |E(z,s)|\le \eta(s)(1+C)D(z).
\end{equation}
\end{lemma}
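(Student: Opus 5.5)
The plan is to start from the two estimates of Lemma~\ref{lem:sharppert} and show that every $\Theta_\ell$ is bounded by a fixed multiple of the total dissipation $D(z)$. Once we have $\sum_{\ell=1}^R\Theta_\ell(z)\le C\,D(z)$, inserting it into \eqref{eq:6.5} gives directly
\[
 |E(z,s)|\le \eta(s)D(z)+\eta(s)C\,D(z)=\eta(s)(1+C)D(z).
\]
This holds for every $s>0$, because $s$ enters only through $\eta(s)$. We take $\rho$ to be the radius supplied by Proposition~\ref{prop:coercivity}.

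\emph{Step 1: bound $\Psi_\ell$ by $A_\ell$.} For a reaction $r\in L_\ell$, both the source $\alpha_r$ and the target $\alpha'_r$ are complexes of $L_\ell$. Hence $A_{\alpha_r}(z)\le A_\ell(z)$ and $A_{\alpha'_r}(z)\le A_\ell(z)$. With $Q:=\sum_{r\in\cR}q_r$, this gives
\[
 \Psi_\ell(z)\le 2Q\,A_\ell(z)\le 2Q\max_{1\le k\le R}A_k(z).
\]
The constant $Q$ depends only on $b$ and $c$, not on $z$ or $s$.

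\emph{Step 2: use the coercivity estimate.} For $0<\norm z<\rho$, \eqref{eq:5.5} of Proposition~\ref{prop:coercivity} gives $\max_k A_k(z)\le D(z)/\delta$. Therefore $\Psi_\ell(z)\le (2Q/\delta)D(z)$ for every active class $L_\ell$, including the non-minimal ones. This is exactly where ($\diamond$) enters.

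The crucial point is that \eqref{eq:5.5} controls the maximum of $A_k$ over all classes, not merely over the $\Sigma$-minimal ones. That is why non-minimal classes, on which ($\diamond$) may fail and $D_\ell$ itself may be small compared with $A_\ell$, cause no difficulty.

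\emph{Step 3: combine with \eqref{eq:6.6}.} Since each summand of $D_\ell$ is non-negative, we have $D_\ell\le D$. Then \eqref{eq:6.6} yields
\[
 \Theta_\ell\le\Bigl(\tfrac98\,D\cdot\tfrac{2Q}{\delta}D\Bigr)^{1/2}=\Bigl(\tfrac{9Q}{4\delta}\Bigr)^{1/2}D .
\]
Summing over $\ell=1,\dots,R$ gives $C:=R\,(9Q/(4\delta))^{1/2}$, which completes the argument.

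I do not expect a genuine obstacle. The only subtle step is recognising that the square-root bound \eqref{eq:6.6} becomes linear in $D$ only after $\Psi_\ell$ is itself bounded by $D$, and that this must go through the global estimate \eqref{eq:5.5} rather than the class-wise estimate \eqref{eq:5.4}.
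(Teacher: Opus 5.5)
Your proof is correct and follows the paper's argument: you bound $\Psi_\ell$ by a multiple of $\max_k A_k$, convert this into a multiple of $D$ via \eqref{eq:5.5} (which is where ($\diamond$) enters), and then feed the result into \eqref{eq:6.6} and \eqref{eq:6.5}. The differences are cosmetic. You use $Q=\sum_r q_r$ where the paper uses $q^*r^*$. You also use the crude bound $D_\ell\le D$ where the paper applies Cauchy--Schwarz in the form $\sum_\ell D_\ell^{1/2}\le (RD)^{1/2}$; this costs a factor $\sqrt R$ in $C$, which is irrelevant for the lemma.
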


\begin{proof}
Take $\rho,\delta$ from Proposition~\ref{prop:coercivity} and set
\[
 q^*=\max_{r\in\cR}q_r,\qquad
 r^*=\max_{1\le\ell\le R}|\{r:r\in L_\ell\}|.
\]
For $r\in L_\ell$, both $x_r$ and $y_r$ (as in Lemma \ref{lem:sharppert}) are at most $A_\ell$.
Thus, by \eqref{eq:5.5},
\begin{equation*}
 \Psi_\ell(z)\le 2q^*r^*A_\ell(z)
 \le 2q^*r^*\max_{1\le\ell'\le R}A_{\ell'}(z)
 \le \frac{2q^*r^*}{\delta}D(z),
\end{equation*}
for every $\ell$, minimal or not.
Lemma~\ref{lem:sharppert} therefore yields
\[
 \Theta_\ell(z)\le
 \left(\frac{9q^*r^*}{4\delta}\right)^{1/2}
 \bigl(D_\ell(z)D(z)\bigr)^{1/2}.
\]
Summing over $\ell$, using Cauchy--Schwarz in the form
$\sum_{\ell=1}^R D_\ell^{1/2}\le\bigl(R\sum_{\ell=1}^R D_\ell\bigr)^{1/2}$ and
$D=\sum_{\ell=1}^R D_\ell$, gives
\begin{equation*}
 \sum_{\ell=1}^R\Theta_\ell(z)\le CD(z),\qquad
 C=\left(\frac{9q^*r^*R}{4\delta}\right)^{1/2}.
\end{equation*}
The bound \eqref{eq:pertcontrol} now follows from \eqref{eq:6.5}.
\end{proof}

\begin{lemma}\label{lem:strictincrease}
Under the assumptions of Proposition~\ref{prop:coercivity}, there are
$\rho>0$ and $\epsilon>0$ such that
\begin{equation*}
 \dot U(z,s)\ge\frac12D(z)
 \ge\frac\delta2\max_{1\le\ell\le R} A_\ell(z)>0
\end{equation*}
whenever $z>0$, $0<\norm z<\rho$ and $\norm{s-b}<\epsilon$. By setting
$$\mathcal U=\{(z,s): z>0,\ 0<\norm z<\rho,\ \norm{s-b}<\epsilon\},$$
 then $\dot U>0$ throughout $\mathcal U$.
\end{lemma}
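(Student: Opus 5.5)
The plan is to combine the decomposition $\dot U=D(z)+E(z,s)$ from the lemma computing $\dot U$ with the perturbation bound of Lemma~\ref{lem:pertcontrol}. The coercivity bound \eqref{eq:5.5} of Proposition~\ref{prop:coercivity} then gives the final inequality. First, I will take $\rho$ to be the minimum of the radii supplied by Proposition~\ref{prop:coercivity} and Lemma~\ref{lem:pertcontrol}. With this choice \eqref{eq:5.5} and \eqref{eq:pertcontrol} hold simultaneously for all $z>0$ with $0<\norm z<\rho$, with the constants $\delta>0$ and $C>0$ fixed once and for all.

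Next, I will choose $\epsilon$. Each $\rho_r(s)=s^{\beta_r}/b^{\beta_r}$ is continuous at $s=b$ with value $1$, and there are finitely many reactions. Hence $\eta(s)\to0$ as $s\to b$, and there is $\epsilon>0$ with $\eta(s)\le 1/(2(1+C))$ whenever $\norm{s-b}<\epsilon$. Shrinking $\epsilon$ below $\min_j b_j$ keeps $s>0$, so the previous lemmas apply. For $(z,s)$ in $\mathcal U$ I then get
\[
 \dot U = D(z)+E(z,s)\ge D(z)-\eta(s)(1+C)D(z)\ge \tfrac12 D(z).
\]

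Finally, \eqref{eq:5.5} gives $\tfrac12 D(z)\ge\tfrac\delta2\max_\ell A_\ell(z)$. This quantity is strictly positive, because $z>0$ makes every $A_a(z)=z^a/c^a$ positive and there is at least one $\Sigma$-active class. The statement $\dot U>0$ on $\mathcal U$ is then immediate.

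I expect no real obstacle here, since the substantive work sits in Proposition~\ref{prop:coercivity} and Lemma~\ref{lem:pertcontrol}. The only points needing care are bookkeeping. The constant $C$ must not depend on $s$, which holds because $C$ is built from $q_r$, $R$, $r^*$ and $\delta$, all defined at the frozen value $b$. The two radii called $\rho$ must be reconciled by taking their minimum.
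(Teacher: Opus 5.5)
Your proposal is correct and follows essentially the same route as the paper. Both combine $\dot U=D+E$ with \eqref{eq:pertcontrol}, choose $\epsilon$ so that $\eta(s)(1+C)\le\tfrac12$, and then invoke \eqref{eq:5.5}. Your extra bookkeeping is harmless and slightly more careful than the paper's: reconciling the radii, keeping $s>0$, noting that $C$ does not depend on $s$, and using the existence of an active class for strict positivity.
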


\begin{proof}
Choose $\rho,\delta,C$ as in Proposition~\ref{prop:coercivity}
and Lemma~\ref{lem:pertcontrol}. Since $\eta(s)\to0$ as $s\to b$,
choose $\epsilon>0$ such that $\eta(s)(1+C)\le1/2$
when $\norm{s-b}<\epsilon$. By the identity for $\dot U$ and
\eqref{eq:pertcontrol},
\[
 \dot U=D+E\ge (1-\eta(s)(1+C))D\ge\frac12D.
\]
The remaining inequalities follow from Proposition~\ref{prop:coercivity}.
\end{proof}

\section{Proof of Theorem~\ref{thm:boundary}}\label{sec:th1.1}

Extend $U$ in \eqref{eq:U} continuously to $z\ge0$ by $0\log0=0$.  By
Proposition~\ref{prop:coercivity}, $\rho<\min_{j\in\Sigma}c_j$, so $\log(z_j/c_j)<0$ for
$0<z_j\le\rho$.  Every term of $U$ is therefore non-negative on $\{z\ge0:\norm z\le\rho\}$, and
vanishes only if the corresponding coordinate does.  Hence $U(z)>0$ for $z\ge0$, $z\neq0$,
$\norm z\le\rho$, and by compactness of $\{z\ge0:\norm z=\rho\}$,
\begin{equation}\label{eq:nu}
 \nu:=\min\{U(z): z\ge0,\ \norm z=\rho\}>0 .
\end{equation}

\begin{proof}[Proof of Theorem~\ref{thm:boundary}]
Choose $\rho$ and $\epsilon$ as in Lemma~\ref{lem:strictincrease}, and let $T_0$ be such that
$\norm{s(t)-b}<\epsilon$ for $t\ge T_0$.  Since $z(t)>0$ at all finite times, the trajectory lies
in $\mathcal U$ at every time $t\ge T_0$ with $\norm{z(t)}<\rho$, and $U(z(\cdot))$ is
non-decreasing on every interval of such times, by Lemma~\ref{lem:strictincrease}.

Let $t\ge T_0$ with $\norm{z(t)}<\rho$.  If $\norm{z(\tau)}=\rho$ for some $\tau\in[T_0,t]$, let
$\sigma$ be the largest such $\tau$.  Then $\norm{z(\tau)}<\rho$ for $\tau\in(\sigma,t]$, so the
trajectory lies in $\mathcal U$ on $(\sigma,t]$ and, by continuity,
\[
 U(z(t))\ \ge\ U(z(\sigma))\ \ge\ \nu
\]
by \eqref{eq:nu}.  If there is no such $\tau$, then $\norm{z(\tau)}<\rho$ throughout $[T_0,t]$,
the trajectory lies in $\mathcal U$ on that whole interval, and $U(z(t))\ge U(z(T_0))>0$.  In
either case, let
\begin{equation*}
\nu_0=\left\{ \begin{array}{cl} \min\{\nu,\,U(z(T_0))\}, & \text{for}\quad\norm{z(T_0)}<\rho,\\
\nu,&  \text{for}\quad\norm{z(T_0)}\ge\rho,\end{array}\right.
\end{equation*}
then
\[
 U(z(t))\ \ge\ \nu_0>0. 
\]

As $U$ is continuous on $\{z\ge0\}$ with $U(0)=0$, there is $\lambda>0$ such that $U(z)<\nu_0$
whenever $z\ge0$ and $\norm z<\lambda$.  Consequently $\norm{z(t)}\ge\lambda$ for every $t\ge T_0$
with $\norm{z(t)}<\rho$, while $\norm{z(t)}\ge\rho$ for the remaining $t\ge T_0$.  Hence
\[
 \liminf_{t\to\infty}\norm{z(t)}\ \ge\ \min\{\lambda,\rho\}>0 ,
\]
so no sequence $t_n\to\infty$ satisfies $z(t_n)\to0$.  Thus $(0,b)\notin\omega(x_0)$, and in
particular \eqref{eq:zs} cannot hold.
\end{proof}

\section{Boundary equilibria and frozen complex balance}\label{sec:frozen}

Throughout this section and the next, the reaction network $\cN$ is weakly reversible and complex
balanced at $x^*=(z^*,s^*)$, and $x(\cdot)$ is the positive trajectory started at $x_0>0$.  We write
\[
 \widetilde S=\operatorname{span}\{y'_r-y_r:r\in\cR\}\subseteq\R^m
\]
for the stoichiometric space of $\cN$, retaining $S\subseteq\R^{|\Sigma|}$ for the
embedded one.  If $w\in\omega(x_0)$ lies on the boundary, its zero set is a
siphon~\cite{AngeliDeLeenheerSontag2007}. All the objects below are attached to such a
siphon $\Sigma$, and we suppress $\Sigma$ from the notation when it is fixed.

The $\Sigma$-blind reactions involve only the species in $\Sigma^c$, by \eqref{eq:zeq}
and $\alpha_r=\alpha'_r=0$.  Viewed as a mass-action reaction network on those species,
with complexes $\beta_r$ and rate constants $k_r$, they form the \emph{$\Sigma$-blind
subsystem}. Its stoichiometric space is
\[
 S_{\mathrm{bl}}(\Sigma)=\operatorname{span}\{\beta'_r-\beta_r:
 r\ \text{$\Sigma$-blind}\}.
\]
These are the only reactions with non-zero rate on the face $z=0$, and, since
$\Sigma$ is a siphon, $\dot z=0$ there. Hence, the face is invariant and the flow on it is
the flow of the $\Sigma$-blind subsystem.  Finally, let
\[
 S_0(\Sigma)=\{v\in\R^{|\Sigma^c|}:(0,v)\in\widetilde S\},
\]
the space of stoichiometric vectors internal to the face.  A $\Sigma$-blind reaction has
reaction vector $(0,\beta'_r-\beta_r)\in\widetilde S$, so
\begin{equation}\label{eq:blincl}
 S_{\mathrm{bl}}(\Sigma)\subseteq S_0(\Sigma)
\end{equation}
always, and the condition imposed below asks only for the reverse inclusion.

\begin{lemma}\label{lem:omega}
Let $w=(0,b)\in\omega(x_0)$ with $b>0$ and zero set $\Sigma$.  Then, $w$ is an
equilibrium, the $\Sigma$-blind subsystem is complex balanced at $b$, and
\begin{equation*}
 \log(b/s^*)\perp S_{\mathrm{bl}}(\Sigma).
\end{equation*}
\end{lemma}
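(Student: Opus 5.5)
The plan has three steps: show $w$ is an equilibrium, deduce that the blind subsystem is complex balanced at $b$ because $w$ lies in the face, and obtain the orthogonality relation from the classical characterisation of complex balanced equilibria.

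\emph{Step 1: $w$ is an equilibrium.} Since $w$ lies on the boundary, alternative (i) of Proposition~\ref{prop:dichotomy} fails, as $x^*>0$. Alternative (ii) then says every point of $\omega(x_0)$ is an equilibrium, so $w$ is one.

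\emph{Step 2: complex balance of the blind subsystem at $b$.} On the face $z=0$ only the $\Sigma$-blind reactions have non-zero rate, as noted above. Evaluated at $w=(0,b)$, the equilibrium condition therefore reads
\[
\sum_{r\ \Sigma\text{-blind}} k_r b^{\beta_r}(\beta'_r-\beta_r)=0 .
\]
This says $b$ is a positive equilibrium of the blind subsystem, but complex balance is a stronger requirement, and that is the point to address. I would use that the blind subsystem is a union of linkage classes of $\cN$ itself, namely the $\Sigma$-blind ones. The complexes $y=(0,\beta)$ and the rate constants are unchanged, so this sub-network is weakly reversible. For $\cN$ complex balanced at $x^*$, Horn--Jackson theory gives $\log x^*\cdot (y'_r-y_r)$-type relations that are inherited linkage class by linkage class. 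Hence the blind subsystem is complex balanced at $s^*$: each linkage class of $\cN$ balances separately at $x^*$, and on blind complexes $x^{*y}=s^{*\beta}$.

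Given that, the blind subsystem is a complex balanced mass-action system. By the Horn--Jackson theorem, all of its positive equilibria are complex balanced. In particular $b$ is complex balanced.

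\emph{Step 3: the orthogonality relation.} Again by Horn--Jackson, the set of positive complex balanced equilibria of the blind subsystem is exactly
\[
\{s>0:\log s-\log s^*\in S_{\mathrm{bl}}(\Sigma)^\perp\},
\]
taken in the blind species coordinates. Species in $\Sigma^c$ that occur in no blind complex can be treated trivially: their coordinates are unconstrained, and $S_{\mathrm{bl}}$ has zero entries there. This yields $\log(b/s^*)\perp S_{\mathrm{bl}}(\Sigma)$.

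\emph{Main obstacle.} The delicate step is the inheritance in Step 2: complex balance at $x^*$ must restrict to the blind linkage classes. The blind classes form whole linkage classes of $\cN$, and complex balance is a per-complex identity, so the identity at each blind complex involves only blind reactions. The restriction should therefore follow directly, but care is needed because distinct blind complexes of $\cN$ might project to the same $\beta$. That identification only sums balanced identities, so it preserves complex balance.
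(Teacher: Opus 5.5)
Your proposal is correct and follows the paper's proof step for step. First, $w$ is an equilibrium by the Siegel--MacLean result. Second, the blind subsystem is complex balanced at $s^*$, because the node identities of $\cN$ at blind complexes involve only blind reactions. Third, Horn--Jackson gives both complex balance at $b$ and $\log(b/s^*)\perp S_{\mathrm{bl}}(\Sigma)$.

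The identification concern in your last paragraph is vacuous. Every blind complex has the form $(0,\beta)$, so it is determined by $\beta$, and no two distinct blind complexes share a $\beta$.
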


\begin{proof}
A trajectory of a complex balanced system converges to the set of equilibria (Siegel and
MacLean~\cite{SiegelMacLean2000} and Sontag~\cite{Sontag2001}), so $w$ is an equilibrium.
The only reactions with non-zero rate at $w$ are the $\Sigma$-blind ones, so $b$ is a positive
equilibrium of the $\Sigma$-blind subsystem.  That subsystem is complex balanced at $s^*$, because
a $\Sigma$-blind complex interacts only in $\Sigma$-blind reactions, so the complex balance
relations of $\cN$ at those complexes involve $\Sigma$-blind reactions only.  By Horn and
Jackson~\cite{HornJackson1972}, every positive equilibrium of a complex balanced system is complex
balanced, the set of them being $\{s>0:\log(s/s^*)\perp S_{\mathrm{bl}}(\Sigma)\}$.
\end{proof}

Lemma~\ref{lem:omega} is in substance known. A closely related statement for boundary
equilibria, assuming deficiency zero rather than complex balance, is Theorem~3.4 of Cappelletti and
Wiuf~\cite{CappellettiWiuf2016}.

Theorem~\ref{thm:boundary} requires the embedded reaction network frozen at $b$ to be complex
balanced.  We give three sufficient conditions.

\begin{proposition}
\label{prop:extensionCB}
Let $h=\log(b/s^*)$.  If there exists $p\in\R^{|\Sigma|}$ with
\[
 (p,h)\in\widetilde S^\perp,
\]
then $\cN_\Sigma(b)$ frozen at $b$ is complex balanced at $c=z^*\circ e^p$.
\end{proposition}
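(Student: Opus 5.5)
We verify the complex balance identity for the frozen network directly, by showing that its reaction fluxes at $c=z^*\circ e^{p}$ are exactly the fluxes of $\cN$ at $x^*$, up to one factor that is constant on each original linkage class. The frozen rate of reaction $r$ evaluated at $c$ is
\[
 k_r b^{\beta_r}c^{\alpha_r}=k_r (s^*)^{\beta_r}e^{\beta_r\cdot h}(z^*)^{\alpha_r}e^{\alpha_r\cdot p}
 =k_r (x^*)^{y_r}\,e^{y_r\cdot(p,h)},
\]
since $b=s^*\circ e^{h}$ componentwise. The hypothesis $(p,h)\in\widetilde S^\perp$ says that $(y'_r-y_r)\cdot(p,h)=0$ for every $r\in\cR$. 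Hence $y\cdot(p,h)$ is constant along every edge, and therefore constant on each linkage class $\Lambda$ of $\cN$, with value $\kappa_\Lambda$, say. So $q_r=k_r(x^*)^{y_r}e^{\kappa_\Lambda}$ for $r\in\Lambda$: the frozen fluxes are the original complex balanced fluxes $J_r:=k_r(x^*)^{y_r}$, rescaled by a positive constant on each original linkage class.

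Next we check the balance relation \eqref{eq:sumqr} at every embedded complex $a\in\cC_\Sigma$. In $\cN$, complex balance at $x^*$ gives $\sum_{r:y_r=y}J_r=\sum_{r:y'_r=y}J_r$ for each original complex $y$. Since each original complex lies in a single linkage class, multiplying by $e^{\kappa_\Lambda}$ preserves this identity: the family $(q_r)$ is again a circulation on the original multi-digraph. An embedded complex $a$ is the image of the set of original complexes $y$ with $\pi_\Sigma(y)=a$. Summing the original balance identities over that fibre gives
\[
 \sum_{r:\alpha_r=a}q_r=\sum_{r:\alpha'_r=a}q_r ,
\]
because a reaction has source (respectively target) with projection $a$ exactly when its source (respectively target) lies in the fibre. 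Here it is essential that embedded reactions are indexed by the original labels, with no cancellation or merging. This is precisely complex balance of $\cN_\Sigma(b)$ at $c$. Positivity of $c$ is immediate, since $z^*>0$.

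The only delicate point is the second step: identified complexes may come from different original linkage classes carrying different factors $e^{\kappa_\Lambda}$. I expect this to be harmless, because balance is verified complex by complex in $\cN$ before summing over fibres, and each original complex carries a single factor. One should also note that $\Sigma$-blind classes project to the zero complex. For them the identity is the same fibre-summing argument, or trivial since these are self-loops at $0$.
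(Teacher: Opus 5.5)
Your proof is correct and follows the paper's argument essentially step for step. You rewrite $k_rb^{\beta_r}c^{\alpha_r}$ as $k_r(x^*)^{y_r}e^{y_r\cdot(p,h)}$, use $(p,h)\perp\widetilde S$ to make this factor constant on each linkage class of $\cN$ (so the original complex balanced circulation is rescaled per class and remains a circulation), and then sum the node balances over fibres of the $\Sigma$-projection; your remark that merged complexes cause no trouble, since balance is checked in $\cN$ before summing, is exactly why the paper's fibre-summing step works.
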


\begin{proof}
For a complex $y=(\alpha,\beta)$,
\[
 c^\alpha b^\beta=(z^*)^\alpha(s^*)^\beta\,
 e^{\alpha\cdot p+\beta\cdot h}.
\]
Since $(p,h)\perp\widetilde S$, the exponential factor is constant on each
linkage class of $\cN$.  Thus, the original complex balanced circulation $J^*$ is rescaled by a
positive constant per linkage class, and remains a circulation.  Summing the node balances
over original complexes with the same $\Sigma$-projection gives complex balance of the
embedded system frozen at $b$, at the point $c$.
\end{proof}

\begin{corollary}
\label{cor:structuralCB}
If
\begin{equation}\label{eq:structuralCB}
 S_{\mathrm{bl}}(\Sigma)=S_0(\Sigma),
\end{equation}
then $\cN_\Sigma(b)$ frozen at every boundary $\omega$-limit point $(0,b)$ with zero
set $\Sigma$ is complex balanced.  Condition \eqref{eq:structuralCB} holds in particular
when $S_0(\Sigma)=\{0\}$, that is, when a stoichiometric compatibility class intersects the
face $z=0$ 
 in at most one point.
\end{corollary}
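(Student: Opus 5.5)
The plan is to reduce the corollary to Proposition~\ref{prop:extensionCB}: we will exhibit some $p\in\R^{|\Sigma|}$ with $(p,h)\in\widetilde S^\perp$, where $h=\log(b/s^*)$.

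The first step uses Lemma~\ref{lem:omega}. Since $(0,b)$ is a boundary $\omega$-limit point with zero set $\Sigma$, that lemma gives $h\perp S_{\mathrm{bl}}(\Sigma)$. By hypothesis \eqref{eq:structuralCB} this means $h\perp S_0(\Sigma)$.

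The second step is a linear-algebra extension argument. Consider the linear map $\pi_\Sigma\colon\widetilde S\to\R^{|\Sigma|}$ and set $W=\pi_\Sigma(\widetilde S)$. Its kernel is exactly $\{(0,v):v\in S_0(\Sigma)\}$. Define a linear functional $\phi$ on $W$ by
\[
 \phi(\pi_\Sigma(u))=-h\cdot\pi_{\Sigma^c}(u),\qquad u\in\widetilde S .
\]
This is well defined: if $\pi_\Sigma(u)=\pi_\Sigma(u')$, then $u-u'=(0,v)$ with $v\in S_0(\Sigma)$, and $h\cdot v=0$ by the first step. Extend $\phi$ to all of $\R^{|\Sigma|}$ (for example, zero on $W^\perp$) and represent it as $\phi(w)=p\cdot w$. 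Then for every $u=(u_\Sigma,u_{\Sigma^c})\in\widetilde S$ we have $p\cdot u_\Sigma+h\cdot u_{\Sigma^c}=0$, that is, $(p,h)\in\widetilde S^\perp$. Proposition~\ref{prop:extensionCB} then gives that $\cN_\Sigma(b)$ is complex balanced at $c=z^*\circ e^p$. Since $b$ was an arbitrary boundary limit point with zero set $\Sigma$, the first claim follows.

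For the particular case, $S_{\mathrm{bl}}(\Sigma)\subseteq S_0(\Sigma)$ by \eqref{eq:blincl}, so $S_0(\Sigma)=\{0\}$ forces equality in \eqref{eq:structuralCB}. It remains to check the stated equivalence with the geometry of the face. Two points $(0,v_1)$ and $(0,v_2)$ of the face lie in the same class exactly when $(0,v_1-v_2)\in\widetilde S$, that is, when $v_1-v_2\in S_0(\Sigma)$. Hence $S_0(\Sigma)=\{0\}$ says precisely that each class meets the face $z=0$ in at most one point. Read literally, this means at most one point of the affine slice. If one also wants the converse direction restricted to the non-negative face, one must note that a non-zero $v\in S_0(\Sigma)$ together with a positive point $b$ on the face gives other face points $b+tv$ for small $t$. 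That is the only mildly delicate point. The main obstacle is the well-definedness of $\phi$, and it is handled exactly by \eqref{eq:structuralCB}.
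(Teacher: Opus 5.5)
Your proof is correct and follows the paper's route. Lemma~\ref{lem:omega} together with \eqref{eq:structuralCB} gives $h\perp S_0(\Sigma)$; a linear-algebra step then produces $p$ with $(p,h)\in\widetilde S^\perp$, Proposition~\ref{prop:extensionCB} finishes, and the special case follows from \eqref{eq:blincl} as in the paper. The only difference is cosmetic: you build $p$ by extending a well-defined functional on $\pi_\Sigma(\widetilde S)$, whereas the paper proves the dual identity $\pi_{\Sigma^c}(\widetilde S^\perp)=S_0(\Sigma)^\perp$ and takes orthogonal complements.
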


\begin{proof}
Lemma~\ref{lem:omega} gives $h=\log(b/s^*)\perp S_{\mathrm{bl}}(\Sigma)$, hence
$h\perp S_0(\Sigma)$ by \eqref{eq:structuralCB}.  Now
\begin{equation*}
 \pi_{\Sigma^c}\bigl(\widetilde S^\perp\bigr)=S_0(\Sigma)^\perp
\end{equation*}
(where $\pi_{\Sigma^c}$ denotes the projection), because for $v\in\R^{|\Sigma^c|}$,
\[
 v\perp\pi_{\Sigma^c}(\widetilde S^\perp)
 \iff (0,v)\perp\widetilde S^\perp
 \iff (0,v)\in\widetilde S
 \iff v\in S_0(\Sigma),
\]
and one takes orthogonal complements.  Thus there is $p$ with $(p,h)\in\widetilde
S^\perp$, and Proposition~\ref{prop:extensionCB} applies.  Finally, if $S_0(\Sigma)=\{0\}$
then two points of one compatibility class lying on the face differ by some
$(0,v)\in\widetilde S$ and therefore coincide; and $S_{\mathrm{bl}}(\Sigma)=\{0\}$ by
\eqref{eq:blincl}, so \eqref{eq:structuralCB} holds.
\end{proof}

The third criterion is point-wise in the rate vector of the embedded reaction network $\cN_\Sigma(b)$, and allows the rate constants of the original network $\cN$ to change.

\begin{proposition}
\label{prop:ratematch}
Fix the  rate vector $k=(k_r)_{r\in\cR}$ of $\cN$ and let $b>0$.  Suppose there are a positive rate vector
$\kappa=(\kappa_r)_{r\in\cR}$ and a positive complex balanced equilibrium $(c,s)$ of $\cN$ with rate
vector $\kappa$, such that
\begin{equation}\label{eq:ratematch}
 \kappa_r s^{\beta_r}=k_r b^{\beta_r},\qquad r\in\cR .
\end{equation}
Then, the embedded network $\cN_\Sigma(b)$ with rate vector $k$ is complex balanced at $c$.
\end{proposition}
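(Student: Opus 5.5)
The argument is a direct transfer of the complex balance relations of $\cN$ with rate vector $\kappa$ at the point $(c,s)$ to the frozen embedded network. It mirrors the proof of Proposition~\ref{prop:extensionCB}, with the identity \eqref{eq:ratematch} taking the place of the linkage-class rescaling.

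\textbf{Step 1: the circulation.} Let
\[
 J_r:=\kappa_r c^{\alpha_r}s^{\beta_r},\qquad r\in\cR.
\]
Complex balance of $\cN$ with rate vector $\kappa$ at $(c,s)$ means that, for every original complex $y\in\cC$,
\[
 \sum_{r:\,y_r=y}J_r=\sum_{r:\,y'_r=y}J_r .
\]
In other words, $J$ is a positive circulation on the original multi-digraph.

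\textbf{Step 2: identify the embedded fluxes.} By \eqref{eq:ratematch},
\[
 J_r=k_r b^{\beta_r}c^{\alpha_r}=k_r(b)\,c^{\alpha_r}.
\]
This is exactly the mass-action flux of reaction $r$ in $\cN_\Sigma(b)$ evaluated at $z=c$, that is, $q_r$ in the notation of Section~\ref{sec:assumptions}. Reactions are indexed by the same labels in both networks, so no reaction is lost or duplicated.

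\textbf{Step 3: sum the node balances.} Fix an embedded complex $a\in\cC_\Sigma$. Every original reaction with $\alpha_r=a$ has a source $y_r$ satisfying $\pi_\Sigma(y_r)=a$, and similarly for targets. Summing the original node balances over all $y\in\cC$ with $\pi_\Sigma(y)=a$ therefore gives
\[
 \sum_{r:\,\alpha_r=a}J_r=\sum_{r:\,\alpha'_r=a}J_r .
\]
Both sides partition cleanly by original source (respectively target) complex. This is precisely \eqref{eq:sumqr}, so $\cN_\Sigma(b)$ with rate vector $k$ is complex balanced at $c$. Positivity of $c$ holds by assumption.

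The only delicate point is bookkeeping. The identification of complexes under $\pi_\Sigma$ must be compatible with partitioning reactions by source and by target, and parallel embedded reactions must be kept distinct, as the paper's convention does. Once this is noted, the argument is a two-line computation.
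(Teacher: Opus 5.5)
Your proposal is correct and follows essentially the paper's own proof. The paper likewise notes that the equilibrium fluxes $\kappa_r c^{\alpha_r}s^{\beta_r}=k_r b^{\beta_r}c^{\alpha_r}$ satisfy node balance on the original graph, and then sums these balances over original complexes with the same $\Sigma$-projection to obtain \eqref{eq:sumqr} at $c$.
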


\begin{proof}
The equilibrium fluxes of $\cN$ with rates $\kappa$ are
$\kappa_rc^{\alpha_r}s^{\beta_r}=k_rb^{\beta_r}c^{\alpha_r}$, and they satisfy node
balance on the original reaction graph.  Summing those balances over the complexes of $\cN$
with the same $\Sigma$-projection gives node balance for the embedded reaction graph with the
reaction rates $k_rb^{\beta_r}$, at the point $c$.
\end{proof}

\begin{example}
Consider
\[
 Z+S_1\longrightarrow 2Z+S_2\longrightarrow 3Z+S_3
 \longrightarrow Z+S_1 .
\]
The reaction network is weakly reversible, with three complexes, one linkage class and
stoichiometric rank two, hence deficiency zero.  With $\kappa_1=\kappa_2=\kappa_3=1$ the
point $(c,s_1,s_2,s_3)=(1,1,1,1)$ is complex balanced.  Hence, by
Proposition~\ref{prop:ratematch}, the frozen embedded reaction network  is complex balanced at $c=1$,
whenever
\[
 k_1b_1=k_2b_2=k_3b_3=1 .
\]
Here, $\Sigma=\{Z\}$ is a siphon and the embedded reaction network is
\[
 Z\longrightarrow2Z\longrightarrow3Z\longrightarrow Z,
\]
with stoichiometric rank one and deficiency $3-1-1=1$.  The  criterion therefore
applies in cases where the embedded reaction network is not deficiency zero.  Condition
($\diamond$) also holds here, since there is a single $\Sigma$-active embedded linkage
class.
\end{example}

\begin{example}
For the same embedded cycle with frozen rates $\lambda_1,\lambda_2,\lambda_3$, complex
balance at $c>0$ requires $\lambda_1c=\lambda_2c^2=\lambda_3c^3$, hence
$\lambda_2^2=\lambda_1\lambda_3$.  For $(\lambda_1,\lambda_2,\lambda_3)=(1,1,2)$ the frozen
system is not complex balanced, so no representation \eqref{eq:ratematch} exists for that
frozen rate vector.
\end{example}

\section{The global attractor conclusion}\label{sec:gac}

Condition \eqref{eq:structuralCB} has a second consequence, independent of the embedded
reaction network: it makes the boundary equilibria isolated inside a compatibility class.

Only those siphons that can occur as zero sets matter.  Call a subset
$\Sigma\subseteq\{1,\ldots,m\}$ \emph{realisable} if some point of the closure of the
stoichiometric compatibility class of $x_0$ has zero set exactly $\Sigma$, and let $\mathcal
S(x_0)$ be the set of non-empty realisable siphons. Since $\omega(x_0)$ lies in the closure of the stoichiometric compatibility class of $x_0$, then zero set of any boundary point of $\omega(x_0)$ belongs to
$\mathcal S(x_0)$.  If $\cN$ has a conservation law with positive coefficients supported on a set
$I$, then no realisable $\Sigma$ contains $I$. This is what keeps $\mathcal S(x_0)$ small in the
examples of Section~\ref{sec:examples}.

\begin{corollary}
\label{cor:singleton}
Assume \eqref{eq:structuralCB} for every $\Sigma\in\mathcal S(x_0)$.  Then,
either $x(t)\to x^*$, or $x(t)$ converges to a single boundary equilibrium $(0,b)$ with
$b>0$ whose zero set is a siphon.
\end{corollary}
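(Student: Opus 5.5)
By Proposition~\ref{prop:dichotomy}, either $x(t)\to x^*$, in which case there is nothing to prove, or $\omega(x_0)$ is contained in $\partial\R^m_{\ge0}$ and consists of equilibria. Assume the second case. The plan is to show two things: for each zero set, $\omega(x_0)$ contains at most one point with that zero set; and $\omega(x_0)$ is connected. Together these will force $\omega(x_0)$ to be a single point.

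\textbf{Step 1: at most one equilibrium per zero set.} Fix $\Sigma\in\mathcal S(x_0)$ and let $w_1=(0,b_1)$ and $w_2=(0,b_2)$ be two points of $\omega(x_0)$ with zero set exactly $\Sigma$, so $b_1,b_2>0$.
\begin{itemize}
\item Both points lie in the closure of the compatibility class of $x_0$, which is $(x_0+\widetilde S)\cap\R^m_{\ge0}$. Hence $(0,b_1-b_2)\in\widetilde S$, that is, $b_1-b_2\in S_0(\Sigma)=S_{\mathrm{bl}}(\Sigma)$ by \eqref{eq:structuralCB}.
\item By Lemma~\ref{lem:omega}, $\log(b_i/s^*)\perp S_{\mathrm{bl}}(\Sigma)$ for $i=1,2$. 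Subtracting gives $\log b_1-\log b_2\perp S_{\mathrm{bl}}(\Sigma)$.
\item Then $(b_1-b_2)\cdot(\log b_1-\log b_2)=0$. Every term $(b_{1,j}-b_{2,j})(\log b_{1,j}-\log b_{2,j})$ is non-negative, since $\log$ is strictly increasing, so each term is zero and $b_1=b_2$.
\end{itemize}
This is the usual Horn--Jackson uniqueness argument, applied on the face.

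\textbf{Step 2: finiteness.} The zero set of every point of $\omega(x_0)$ lies in $\mathcal S(x_0)$, which is a finite collection of subsets. By Step~1, $\omega(x_0)$ is therefore a finite set.

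\textbf{Step 3: connectedness.} The trajectory is bounded, since sub-level sets of $V$ are compact, so $\omega(x_0)$ is non-empty, compact and connected. A finite, non-empty, connected subset of $\R^m$ is a single point $w=(0,b)$. Because $\omega(x_0)=\{w\}$ and the trajectory is bounded, $x(t)\to w$: otherwise some sequence $x(t_n)$ would stay at distance at least $\varepsilon$ from $w$, and by compactness it would have an accumulation point in $\omega(x_0)$ other than $w$. Finally, $w$ is an equilibrium by Proposition~\ref{prop:dichotomy}, and its zero set $\Sigma$ is non-empty and a siphon by \cite{AngeliDeLeenheerSontag2007}, with $b>0$ on $\Sigma^c$ by construction.

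The main point needing care is Step~1. One must use that the two points share the same zero set, so that their difference lies in $\{0\}\times\R^{|\Sigma^c|}$. The positivity of $b_1$ and $b_2$ on all of $\Sigma^c$ is also essential, since the logarithm argument needs it.
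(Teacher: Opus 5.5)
Your proof is correct and follows the paper's argument. The steps are the same: the dichotomy, then at most one $\omega$-limit point per realisable zero set via Lemma~\ref{lem:omega} and \eqref{eq:structuralCB}, then finiteness plus connectedness. The differences are cosmetic. You inline the Horn--Jackson uniqueness computation $(b_1-b_2)\cdot(\log b_1-\log b_2)=0$ rather than citing it. You also spell out why a singleton $\omega$-limit set of a bounded trajectory implies convergence, which the paper leaves implicit.
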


\begin{proof}
By Proposition~\ref{prop:dichotomy}, we may assume $\omega(x_0)\subseteq\partial\R^m_{\ge0}$
and that every point of $\omega(x_0)$ is an equilibrium; each such point has a zero set
in $\mathcal S(x_0)$, and $\mathcal S(x_0)$ is finite.  Fix $\Sigma\in\mathcal S(x_0)$, and let
$(0,b_1),(0,b_2)\in\omega(x_0)$ both have zero set $\Sigma$.  By Lemma~\ref{lem:omega},
$b_1$ and $b_2$ are positive equilibria of the $\Sigma$-blind subsystem, which is complex
balanced at $s^*$. By Horn and Jackson~\cite{HornJackson1972}, the positive equilibria of a
complex balanced system intersect each coset of its stoichiometric space in exactly one point,
here in exactly one point of each coset of $S_{\mathrm{bl}}(\Sigma)$.  On the other hand
$(0,b_1)-(0,b_2)\in\widetilde S$, so $b_1-b_2\in S_0(\Sigma)=S_{\mathrm{bl}}(\Sigma)$ by
\eqref{eq:structuralCB}: the two points lie in the same such coset, whence $b_1=b_2$.
Therefore, $\omega(x_0)$ is finite, and being connected it is a single point.
\end{proof}

\begin{theorem}
\label{thm:GAC}
Let $\cN$ be a weakly reversible and complex balanced reaction network, and let $x_0>0$.  Assume that for
every $\Sigma\in\mathcal S(x_0)$,
\begin{enumerate}
\item[(a)] $S_0(\Sigma)=S_{\mathrm{bl}}(\Sigma)$, and
\item[(b)] condition ($\diamond$) holds for every $\Sigma$-minimal linkage class of the embedded reaction network.
\end{enumerate}
Then, $x(t)\to x^*$.
\end{theorem}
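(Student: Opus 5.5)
The plan is a proof by contradiction that combines Corollary~\ref{cor:singleton} with Theorem~\ref{thm:boundary}. By hypothesis (a), Corollary~\ref{cor:singleton} applies, so either $x(t)\to x^*$, which is the claim, or $x(t)\to w=(0,b)$ for a single boundary equilibrium with $b>0$. In the second case the zero set $\Sigma$ of $w$ is non-empty. It is a siphon, by the result of~\cite{AngeliDeLeenheerSontag2007}, and it belongs to $\mathcal S(x_0)$, because $w$ lies in the closure of the compatibility class of $x_0$. I therefore assume $x(t)\to(0,b)$ and aim for a contradiction.

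Next I would check the hypotheses of Theorem~\ref{thm:boundary} for this $\Sigma$ and this trajectory.

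\begin{itemize}
\item \emph{Weak reversibility and siphon.} $\cN$ is weakly reversible by assumption, and $\Sigma$ is a non-empty siphon.
\item \emph{Surviving coordinates converge.} Writing $x=(z,s)$, full convergence $x(t)\to(0,b)$ gives $s(t)\to b>0$ as $t\to\infty$, not merely along a subsequence.
\item \emph{Frozen complex balance.} Since $(0,b)\in\omega(x_0)$ has zero set $\Sigma$ and (a) holds, Corollary~\ref{cor:structuralCB} shows that $\cN_\Sigma(b)$ is complex balanced, with positive equilibrium $c=z^*\circ e^p$. This comes from Proposition~\ref{prop:extensionCB} via Lemma~\ref{lem:omega}.
\item \emph{Condition ($\diamond$).} This is exactly hypothesis (b) for $\Sigma$.
\end{itemize}

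Theorem~\ref{thm:boundary} then gives $\liminf_{t\to\infty}\norm{z(t)}>0$. This contradicts $z(t)\to0$, so the boundary alternative is impossible and $x(t)\to x^*$.

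The main point needing care is consistency of definitions. In Theorem~\ref{thm:boundary}, condition ($\diamond$) is formulated for the embedded network frozen at $b$. Its $\Sigma$-minimal classes, however, depend only on the $\Sigma$-projection graph and not on $b$, because the frozen rates $k_rb^{\beta_r}$ are positive. So hypothesis (b), stated without reference to $b$, is the right condition.

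One also has to handle the degenerate case in which no $\Sigma$-active class exists. Then $\dot z=0$, so $z(t)=z(0)>0$ is constant, and this already contradicts $z(t)\to0$.

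Otherwise the argument is a direct assembly of earlier results, and I expect no genuine obstacle beyond checking that $\mathcal S(x_0)$ contains the zero set of the limit point.
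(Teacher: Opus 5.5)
Your proposal is correct and follows the paper's proof essentially step for step. Corollary~\ref{cor:singleton} reduces the boundary alternative to convergence to a single equilibrium $(0,b)$ whose zero set lies in $\mathcal S(x_0)$; Corollary~\ref{cor:structuralCB} and hypothesis (b) then supply the hypotheses of Theorem~\ref{thm:boundary}, whose conclusion $\liminf_{t\to\infty}\norm{z(t)}>0$ contradicts $z(t)\to0$. Your remarks on the case with no $\Sigma$-active classes and on the $b$-independence of $\Sigma$-minimality are harmless refinements of points the paper already handles in Section~\ref{sec:assumptions}.
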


\begin{proof}
By (a) and Corollary~\ref{cor:singleton}, either $x(t)\to x^*$, and we are done, or
$x(t)$ converges to a single boundary equilibrium $(0,b)$ with $b>0$ and zero set a siphon
$\Sigma\in\mathcal S(x_0)$, that is \eqref{eq:zs} holds.  In the latter case (a) and
Corollary~\ref{cor:structuralCB} make the embedded reaction network frozen at $b$ complex balanced,
and (b) supplies ($\diamond$). Theorem~\ref{thm:boundary} then gives
$\liminf_{t\to\infty}\norm{z(t)}>0$, contradicting \eqref{eq:zs}.
\end{proof}

\begin{remark}
Under   hypothesis (a), suppose that $z(t_n)\to0$ for some sequence $t_n\to\infty$, and that $\liminf_{t\to\infty}s_j(t)>0$ for every surviving species $j$. By boundedness, after passing to a subsequence, $s(t_n)\to b>0$. Corollary 7.1 implies that the boundary $\omega$-limit set is a singleton. Consequently, $s(t)\to b$, and Theorem 1.2 yields a contradiction.
\end{remark}

\section{Examples}\label{sec:examples}

The first example shows that ($\diamond$) is strictly weaker than requiring
$S_\ell\cap\R^{|\Sigma|}_{\ge0}\neq\{0\}$ for every $\Sigma$-active embedded linkage class.

\begin{example}\label{ex:minimalmatters}
Consider the four-species reaction network
\[
 A+B\rightleftharpoons 2A,
 \qquad 3A+C\rightleftharpoons 3A+D,
 \qquad C\rightleftharpoons D ,
\]
with all six rate constants equal to one.  It is weakly reversible with six complexes, three
linkage classes and stoichiometric space
$\widetilde S=\operatorname{span}\{(1,-1,0,0),(0,0,-1,1)\}$ of dimension two. Hence the deficiency is
$6-3-2=1$, and it is complex balanced at $x^*=(1,1,1,1)$.  The conserved quantities are $T_1=x_A+x_B$
and $T_2=x_C+x_D$.

A realisable set therefore contains at most one of $A,B$ and at most one of $C,D$, leaving the
eight sets $\{A\},\{B\},\{C\},\{D\}$ and $\{A,C\},\{A,D\},\{B,C\},\{B,D\}$.  Each of the last
seven contains $B$, $C$ or $D$ while being disjoint from the source of a reaction producing that
species, namely $2A\to A+B$, $D\to C$ and $C\to D$ respectively, and so is not a siphon.  The
reaction network does have other siphons, namely $\{A,B\}$, $\{C,D\}$, $\{A,C,D\}$ and
$\{A,B,C,D\}$, but each of them contains $\{A,B\}$ or $\{C,D\}$ and is therefore not realisable.
Hence, $\mathcal S(x_0)=\{\{A\}\}$.

For $\Sigma=\{A\}$, the linkage class $C\rightleftharpoons D$ is $\Sigma$-blind, so
$S_{\mathrm{bl}}(\Sigma)=\operatorname{span}\{(0,-1,1)\}$, while an element of $\widetilde S$
with vanishing $A$-coordinate is a multiple of $(0,0,-1,1)$. Hence, $S_0(\Sigma)$ is the same space and (a) holds.

The embedded reaction network has the two $\Sigma$-active linkage classes
\[L_1\colon \ A\rightleftharpoons 2A, \qquad L_2\colon\ 3A\rightleftharpoons 3A,\]
the second a single complex carrying two parallel loops, and one $\Sigma$-blind class,
$0\rightleftharpoons 0$.  Here $S_2=\{0\}$, so
 $S_2\cap\R_{\ge0}=\{0\}$, and the requirement $S_\ell\cap\R^{|\Sigma|}_{\ge0}\neq\{0\}$ fails for $L_2$. So does  the bound \eqref{eq:5.4} for $L_2$, as   $D_2\equiv0$, while $A_2(z)=(z/c)^3>0$.

In contrast, $\cC^{\mathrm{act}}_\Sigma=\{A,2A,3A\}$ has only one $\Sigma$-minimal element, namely $A$, hence $L_2$ is not $\Sigma$-minimal. Moreover,  $S_1=\operatorname{span}\{1\}$ gives ($\diamond$) for $L_1$.
Theorem~\ref{thm:GAC} applies, but would not if ($\diamond$) were imposed on all $\Sigma$-active classes.
\end{example}

 The next example is not covered by~\cite{GopalkrishnanMillerShiu2014}.

\begin{example}\label{ex:notse}
Consider the four-species network
\[
 A+C\rightleftharpoons B+C,\qquad 2A+C\rightleftharpoons 2A+D .
\]
It is weakly reversible with four complexes, two linkage classes and stoichiometric space
$\widetilde S=\operatorname{span}\{(-1,1,0,0),(0,0,-1,1)\}$ of dimension two, hence of deficiency
$4-2-2=0$. Therefore, it is  complex balanced for every choice of rate constants.  Neither half is
autonomous: the equations for $x_A,x_B$ involve $x_C$, and those for $x_C,x_D$ involve $x_A$.

The conserved quantities are $T_1=x_A+x_B$ and $T_2=x_C+x_D$, so, as in
Example~\ref{ex:minimalmatters}, a realisable set contains at most one of $A,B$ and at most one of
$C,D$; of those eight sets only $\{A,C\}$ is a siphon, the set $\{A\}$,  for instance, fails
because $B+C\to A+C$ produces $A$ from a complex not containing it.  Every other siphon of the
reaction network contains $\{A,B\}$ or $\{C,D\}$ and is therefore not realisable.  Hence
$\mathcal S(x_0)=\{\{A,C\}\}$.

For $\Sigma=\{A,C\}$, no complex has zero $\Sigma$-projection, so there are no $\Sigma$-blind linkage classes and $S_{\mathrm{bl}}(\Sigma)=\{0\}$. Furthermore,  an element of $\widetilde S$ with
vanishing $A$- and $C$-coordinates is zero, so $S_0(\Sigma)=\{0\}$, and (a) holds.

The embedded reaction network is
\[
 A+C\rightleftharpoons C,
 \qquad
 2A+C\rightleftharpoons 2A,
\]
with two $\Sigma$-active linkage classes.  Its $\Sigma$-minimal complexes are $C$ and $2A$, so
both classes are $\Sigma$-minimal. Their stoichiometric spaces, $S_1=\operatorname{span}\{(1,0)\}$ and $S_2=\operatorname{span}\{(0,1)\}$, intersect $\R^2_{\ge0}$ non-trivially, so ($\diamond$) holds.  Theorem~\ref{thm:GAC} applies.

However, the reaction network is not strongly endotactic.  Take $w=(-1,-1,0,-1)$, which is not orthogonal to
$\widetilde S$, because $w\cdot(0,0,-1,1)=-1$.  The values of $w$ on the complexes are
\[
 w\cdot(A{+}C)=w\cdot(B{+}C)=-1,\qquad w\cdot(2A{+}C)=-2,\qquad w\cdot(2A{+}D)=-3 ,
\]
so the complexes maximising $w\cdot y$ are $A+C$ and $B+C$, and the two reactions between them
satisfy $w\cdot(y'-y)=0$.  Hence, there is no reaction $y\to y'$ with $y$ maximising $w\cdot y$
over all complexes and $w\cdot y'<w\cdot y$, which is what the definition of a strongly
endotactic reaction network requires for every $w$ not orthogonal to $\widetilde S$.  (The
reaction $2A+C\to2A+D$ does satisfy $w\cdot y'<w\cdot y$, but its source is not $w$-maximal.)
\end{example}

The hypotheses of Theorem~\ref{thm:GAC} and strong endotacticity are incomparable; the opposite
direction is illustrated next.

\begin{example}\label{ex:incomparable}
Consider
\[
 A+B\longrightarrow A+C\longrightarrow 2A\longrightarrow A+B,
\]
which is weakly reversible with three complexes, one linkage class and stoichiometric rank two,
hence of deficiency zero and complex balanced for all rate constants. Being weakly reversible with
a single linkage class, it is strongly endotactic~\cite{GopalkrishnanMillerShiu2014}.  Every complex contains $A$,
so $\Sigma=\{A\}$ is a siphon; and it is realisable, since the compatibility class
$\{x_A+x_B+x_C=T\}$ contains points with $x_A=0$ and $x_B,x_C>0$.
 There are  no $\Sigma$-blind linkage classes, so $S_{\mathrm{bl}}(\Sigma)=\{0\}$,
whereas $S_0(\Sigma)=\operatorname{span}\{(-1,1)\}$.  Hypothesis (a) of Theorem~\ref{thm:GAC}
therefore fails.
\end{example}


\end{document}